\theoremstyle{definition}
\newtheorem{theorem}{Theorem}[section]
\newtheorem{proposition}[theorem]{Proposition}
\newtheorem{lemma}[theorem]{Lemma}
\newtheorem{corollary}[theorem]{Corollary}
\newtheorem{conjecture}[theorem]{Conjecture}
\newtheorem{definition}[theorem]{Definition}
\newtheorem{example}[theorem]{Example}
\theoremstyle{remark}
\numberwithin{equation}{section}
\newlength\cellsize \setlength\cellsize{15\unitlength}
\newcommand\cellify[1]{\def\thearg{#1}\def\nothing{}%
\ifx\thearg\nothing
\vrule width0pt height\cellsize depth0pt\else
\hbox to 0pt{\usebox2\hss}\fi%
\vbox to 15\unitlength{
\vss
\hbox to 15\unitlength{\hss$#1$\hss}
\vss}}
\newcommand\tableau[1]{\vtop{\let\\=\cr
\setlength\baselineskip{-16000pt}
\setlength\lineskiplimit{16000pt}
\setlength\lineskip{0pt}
\halign{&\cellify{##}\cr#1\crcr}}}
\newcommand\expath[1]{%
\hbox to 0pt{\usebox3\hss}%
\vbox to 15\unitlength{
\vss
\hbox to 15\unitlength{\hss$#1$\hss}
\vss}}
\newcommand\bas[1]{\omit \vbox to \cellsize{ \vss \hbox to \cellsize{\hss$#1$\hss} \vss}}
\begin{document}

\title[Classifying the near-equality of ribbon Schur functions]{Classifying the near-equality of ribbon Schur functions}

\author{Foster Tom}
\address{
 Department of Mathematics,
 University of California, Berkeley,
 Berkeley CA 94709, USA}
\email{ftom@berkeley.edu}

\thanks{The author was supported in part by the National Sciences and Engineering Research Council of Canada.}
\subjclass[2010]{Primary 05E05; Secondary 05E10, 20C30}
\keywords{Jacobi--Trudi determinant, Littlewood--Richardson rule, ribbon Schur function, Schur function, Schur-positive, symmetric function}

\begin{abstract} We consider the problem of determining when the difference of two ribbon Schur functions is a single Schur function. We fully classify the five infinite families of pairs of ribbon Schur functions whose difference is a single Schur function with corresponding partition having at most two parts at least $2$. We also prove an identity for differences of ribbon Schur functions and we determine some necessary conditions for such a difference to be Schur-positive, depending on the distribution of $1$'s and the end row lengths.
\end{abstract}

\maketitle
\section{Introduction}\label{sec:intro}

We investigate Schur functions, which form the most esteemed basis for the algebra of symmetric functions. Schur functions arise in representation theory both as the irreducible representations of the symmetric group $S_n$ under the Frobenius map and as the irreducible polynomial representations of the general linear group \cite{youngtab}. In algebraic geometry, their structure constants, the Littlewood--Richardson coefficients, appear as intersection numbers on the Grassmannian \cite{youngtab}. Littlewood--Richardson coefficients also characterize exactly when Hermitian matrices $A$, $B$, and $A+B$ can have prescribed eigenvalues and when modules $\mathcal{C}$, $\mathcal{B}$, and $\mathcal{C}/\mathcal{B}$ over a discrete valuation ring can have prescribed invariant factors \cite{lreigen}. Of particular interest is the problem of determining when a symmetric function is Schur-positive, meaning a nonnegative linear combination of Schur functions, because this is a rare phenomenon \cite{posprob} that suggests a representation-theoretic or geometric interpretation. One way to establish Schur-positivity of a symmetric function is to interpret it as the Frobenius series of a graded $S_n$-module \cite{cdm}. Machinery such as dual equivalence graphs \cite{posdualeq}, crystal bases \cite{crystal}, and Chern plethysm \cite{posboolean} have also been developed to prove Schur-positivity. Schur-positivity has been studied for boolean product polynomials \cite{posboolean}, sets of permutations \cite{possets, possetperm}, labeled binary trees \cite{postrees}, Kazhdan--Lusztig immanants \cite{poskl}, and chromatic symmetric functions of various graphs \cite{chrompostrees, chromposnsym, chrompos31, chromposalg, chromposquasi}. \\

An especially notorious problem is to classify Schur-positivity for differences of skew Schur functions, which are generalizations of Schur functions. Partial results exist \cite{schurpossquare, schurposlogcon}, but it even remains unknown when two skew Schur functions are equal. Fortunately, more is known in the case of ribbon Schur functions, which are a special case of skew Schur functions that are indexed by compositions. Billera, Thomas, and van Willigenburg have classified when two ribbon Schur functions are equal \cite{ribeq}, providing insight towards a combinatorial classification of equality of skew Schur functions \cite{schureqtowards, schureqcoinc}. Necessary and sufficient conditions have been found for the difference of two ribbon Schur functions to be Schur-positive \cite{ribposschubert, ribposnec, ribposmultfree, ribposnecmax} and the sets of nonzero coefficients in the Schur function expansion are fairly well understood \cite{ribeqsupport, ribpossupport}. \\

We study the special case of determining when the difference of two ribbon Schur functions is a single Schur function. After knowing when two ribbon Schur functions are equal, this near-equality phenomenon is the next natural one to investigate and will hopefully provide new insight on Schur-positivity in general. In Section \ref{sec:background} we introduce the necessary background and machinery. In Section \ref{sec:main} we prove our main theorem, which classifies the five infinite families of pairs of ribbon Schur functions whose difference is a single Schur function whose corresponding partition has at most two parts at least $2$. Along the way, we prove an identity for differences of ribbon Schur functions and a necessary condition for Schur-positivity in terms of the end parts of the corresponding compositions. In Section \ref{sec:further} we conclude with our conjectured classification of the sixteen infinite families of near-equality of ribbon Schur functions.

\section{Background}\label{sec:background}
\subsection{Compositions and partitions} \label{subsec:compositions} 

A \emph{composition} is a finite sequence of positive integers $\alpha=\alpha_1\cdots\alpha_R$. The integers $\alpha_i$ are called the \emph{parts} of $\alpha$. The \emph{length} of $\alpha$, denoted $\ell(\alpha)$, is the number of parts $R$ and the \emph{size} of $\alpha$ is the sum of its parts. When $\alpha$ has consecutive equal parts $\alpha_{i+1}=\cdots=\alpha_{i+m}=j$ we will often abbreviate them as $j^m$. By convention, $\alpha_i=0$ for $i>\ell(\alpha)$. The \emph{reverse} of $\alpha$ is the composition $\alpha^*=\alpha_R\cdots\alpha_1$. The \emph{ends} of $\alpha$ is the multiset $$e(\alpha)=\{\alpha_1,\alpha_R\}.$$ A composition $\nu$ is called a \emph{partition} if its parts are weakly decreasing, that is, $\nu_1\geq\nu_2\geq\cdots$. There is a unique partition $\lambda(\alpha)$ \emph{determined} by $\alpha$ given by reordering its parts in weakly decreasing order. Given compositions $\alpha$ and $\beta$, we define the \emph{lexicographic order} by $\alpha>_{lex}\beta$ if $\alpha_i>\beta_i$ at the smallest index $i$ at which they differ. We say that $\beta$ is a \emph{coarsening} of $\alpha$, denoted $\beta\geq_{coar}\alpha$, if $\beta$ can be obtained from summing adjacent parts of $\alpha$.\\

\emph{Throughout this paper, the letters $\alpha$ and $\beta$ will always denote compositions of length $R$ and size $N$ and the letter $\nu$ will always denote a partition of size $N$. We assume that $\alpha\neq 1^R$.\\ }

We also introduce some useful parameters that describe the distribution of the parts of $\alpha$ that are equal to $1$. Let $k=|\{i: \alpha_i=1\}|$ denote the number of such parts and let $\delta_\alpha$ denote the number of such parts in $e(\alpha)$, in other words, $$\delta_\alpha=\chi(\alpha_1=1)+\chi(\alpha_R=1),$$ where for a proposition $P$, $\chi(P)$ is $1$ if $P$ is true and $0$ if $P$ is false. Now writing $\alpha$ as $$\alpha=1^{p_1}z_11^{p_2}z_2\cdots 1^{p_{R-k}}z_{R-k}1^{p_{R-k+1}},$$ where the $p_i\geq 0$ and the $z_i\geq 2$, we define the following sequences of integers. We set $z(\alpha)=z_1\cdots z_{R-k}$. The \emph{profile} of $\alpha$ is $$p(\alpha)=p_1\cdots p_{R-k+1}.$$  We record the number of occurrences of the integer $j$ in the profile of $\alpha$ by defining $$q(\alpha)=q_0q_1\cdots,\hbox{ where }q_j=|\{i: \ p_i=j\}|.$$ It is often useful to subtract the first and last integers of the profile of $\alpha$ by $1$, so we define \begin{align*}p'(\alpha)&=p'_1p'_2\cdots p'_{R-k}p'_{R-k+1}=(p_1-1)p_2\cdots p_{R-k}(p_{R-k+1}-1)\hbox{ and }\\q'(\alpha)&=q'_0q'_1\cdots,\hbox{ where }q'_j=|\{i: \ p'_i=j\}|.\end{align*}

Let us also note that, because only $p'_1$ and $p'_{R-k+1}$ can be negative, \begin{align}\label{eq:sumqj}&\sum_{j\geq 0}q_j=|\{i: \ p_i\geq 0\}|=R-k+1\hbox{ and }\\\nonumber&\sum_{j\geq 0}q'_j=R-k+1-\chi(p'_1=-1)-\chi(p'_{R-k+1}=-1)=R-k-1+\delta_\alpha.\end{align}

\begin{example} \label{ex:comps}
The compositions $\alpha=313$ and $\beta=412$ have length $3$ and size $7$. We have $\alpha^*=313=\alpha$, $e(\alpha)=\{3,3\}$, and $\alpha$ determines the partition $\nu=\lambda(\alpha)=331$. Moreover, $\beta>_{lex}\alpha$ because $\beta_1=4>3=\alpha_1$. Writing $\alpha=313=1^0 \ 3 \ 1^1 \ 3 \ 1^0$, we can read off that $$k=1, \ \delta_\alpha=0, \ z(\alpha)=33, \ p(\alpha)=010, \ p'(\alpha)=(-1)1(-1), \ q(\alpha)=21, \ \hbox{ and }q'(\alpha)=01.$$ 
\end{example}

\subsection{Diagrams and ribbons}\label{subsec:diagrams}

We define the \emph{diagram} of $\nu$ to be the left-justified array of cells with $\nu_i$ cells in the $i$-th row. We use the English convention, where rows are counted from the top. The \emph{conjugate} of $\nu$, denoted $\nu'$, is the partition whose diagram is that of $\nu$ reflected across the diagonal from the top left corner towards the bottom right. Explicitly, $\nu'$ is given by $\nu'_j=|\{i: \ \nu_i\geq j\}|$. We define the \emph{ribbon diagram} of $\alpha$ to be the array of cells with $\alpha_i$ cells in the $i$-th row and where the rightmost cell of the $(i+1)$-th row is directly below the leftmost cell of the $i$-th row. The \emph{transpose} of $\alpha$, denoted $\alpha^t$, is the composition whose ribbon diagram is that of $\alpha$ reflected across the diagonal.

\begin{example} The diagrams of $\nu=331$ and $\nu'=322$ and the ribbon diagrams of $\alpha=313$, $\alpha^t=11311$, $\beta=412$, and $\beta^t=13111$ are shown below. $$\tableau{\ &\ &\ \\ \ &\ &\ \\ \ }\hspace{15pt} \tableau{\ &\ &\ \\ \ &\ \\ \ &\ }\hspace{15pt} \tableau{&& \ &\ &\ \\ && \ \\ \ &\ &\ }\hspace{15pt} \tableau{&& \ \\ && \ \\ \ &\ &\ \\ \ \\ \ }\hspace{15pt} \tableau{& \ &\ &\ &\ \\ & \ \\ \ &\ }\hspace{15pt}\tableau{&& \ \\ \ &\ &\ \\ \ \\ \ \\ \ }$$
\end{example}

Explicitly, $\alpha^t$ is given by $$\alpha^t=(p'_{R-k+1}+2)1^{z_{R-k}-2}(p'_{R-k}+2)\cdots(p'_2+2)1^{z_1-2}(p'_1+2).$$ This is because a row of length $z_i\geq 2$ corresponds to $(z_i-2)$ columns of length one, preceded and followed by columns of length increased by one, and conversely a sequence of $p'_i$ rows of length one corresponds to a column of length $(p'_i+2)$, using the first cell of the previous row and the last cell of the following row. Now the number of parts of $\alpha^t$ equal to $1$ is \begin{align*} (z_{R-k}-2)+\cdots+(z_1-2)&+\chi(p'_1=-1)+\chi(p'_{R-k+1}=-1)\\&=(z_1+\cdots+z_{R-k})-2(R-k)+\chi(\alpha_R\neq 1)+\chi(\alpha_1\neq 1)\\&=(N-k)-2(R-k)+(2-\delta_\alpha)=N-2R+k+2-\delta_\alpha\end{align*} and for $j\geq 2$ the number of parts of $\alpha^t$ equal to $j$ is the number of $i$ such that $p'_i+2=j$, namely $q'_{j-2}$. Thus we have proven that \begin{equation}\label{eq:parttranspose}\lambda(\alpha^t)=\lambda(1^{N-2R+k+2-\delta_\alpha}2^{q'_0}3^{q'_1}\cdots).\end{equation}
In particular, given $\lambda(\alpha)$, one can read off $\lambda(\alpha^t)$ directly from $q'(\alpha)$.
\subsection{Schur functions and ribbon Schur functions}\label{subsec:schrib}
A \emph{semistandard Young tableau (SSYT) of shape $\nu$ (respectively, of ribbon shape $\alpha$)} is a filling $T$ of the cells of the diagram of $\nu$ (respectively, the ribbon diagram of $\alpha$) with positive integers so that the integers in every row are weakly increasing from left to right and the integers in every column are strictly increasing from top to bottom. We use $T_{i,j}$ to refer to the integer in the $i$-th row and $j$-th column of $T$. We also define the \emph{content}  $$\hbox{cont}(T)=\hbox{cont}_1(T)\hbox{cont}_2(T)\cdots$$ 
where $\text{cont}_i(T)$ is the number of $i$'s in $T$. Now for a partition $\nu$ we define the \emph{Schur function} $s_\nu$ to be the formal power series in infinitely many commuting variables $(x_1,x_2,\ldots)$ given by $$s_\nu=\sum_{T\text{ an SSYT of shape }\nu}x_1^{\text{cont}_1(T)}x_2^{\text{cont}_2(T)}\cdots.$$
Similarly, for a composition $\alpha$ we define the \emph{ribbon Schur function} $r_\alpha$ by $$r_\alpha=\sum_{T\text{ an SSYT of ribbon shape }\alpha}x_1^{\text{cont}_1(T)}x_2^{\text{cont}_2(T)}\cdots.$$

\begin{example}\label{ex:ssyt} Below are four SSYTs of shape $\nu=331$ and two SSYTs $T$ and $U$ of ribbon shape $\alpha=313$. We have $T_{1,4}=1$, $\hbox{cont}(T)=421$, $U_{1,4}=2$, and $\hbox{cont}(U)=241$.
$$\tableau{1&1&1\\2&2&4\\6}\hspace{15pt}\tableau{1&1&2\\2&2&4\\6}\hspace{15pt}\tableau{1&2&2\\3&3&4\\6}\hspace{15pt}\tableau{1&2&3\\2&3&4\\6}\hspace{15pt}T=\tableau{& &\ 1&1&1\\ & &\ 2\\1&2&3}\hspace{15pt}U=\tableau{& &\ 1&2&2\\ & &\ 2\\1&2&3}$$
Below are the four terms of $s_{331}$ corresponding to these four SSYTs of shape $331$ and the two terms of $r_{313}$ corresponding to these two SSYTs of ribbon shape $313$. $$s_{331}=\cdots+x_1^3x_2^2x_4x_6+\cdots+x_1^2x_2^3x_4x_6+\cdots+2x_1x_2^2x_3^2x_4x_6+\cdots.$$ $$r_{313}=\cdots+x_1^4x_2^2x_3+\cdots+x_1^2x_2^4x_3+\cdots.$$ \end{example}

The Schur functions $\{s_\nu: \ \nu\hbox{ a partition}\}$ form a basis for the algebra of symmetric functions $\Lambda$ \cite[Corollary 7.10.6]{enum}. Moreover, the ribbon Schur function $r_\alpha$ belongs to $\Lambda$ \cite[Theorem 7.10.2]{enum}. Therefore, we can uniquely expand $r_\alpha$ as a linear combination of Schur functions. In fact, it turns out that $r_\alpha$ is \emph{Schur-positive}, meaning a \emph{nonnegative} linear combination of Schur functions. In general, for symmetric functions $F,G\in\Lambda$, we write $$F\geq_s G$$ if the difference $F-G$ is Schur-positive. Because the Schur functions form a basis for the algebra of symmetric functions, we have $s_\nu=s_\mu$ if and only if $\nu=\mu$. On the other hand, there exist equalities between ribbon Schur functions, in particular, for any composition $\alpha$, we have $r_\alpha=r_{\alpha^*}$ \cite[Theorem 4.1]{ribeq}. Fortunately, for all of the compositions $\alpha$ we will consider, the equality $r_\alpha=r_\beta$ will hold only for $\beta=\alpha$ and $\beta=\alpha^*$. For this reason, we will frequently think of compositions up to reversal when considering their ribbon Schur functions. 

\subsection{Combinatorial tools}\label{subsec:rels}
We now introduce our first main combinatorial tool for calculating ribbon Schur functions. \\

A \emph{Littlewood--Richardson (LR) tableau} of shape $\alpha$ is an SSYT $T$ of ribbon shape $\alpha$ satisfying the \emph{lattice word condition}: in the sequence of entries $a_1\cdots a_N$ of $T$ read from right to left and top to bottom, every initial subsequence $a_1\cdots a_j$ contains at least as many $i$'s as $(i+1)$'s for every $i\geq 1$. We denote by $LR_\alpha$ the set of LR tableaux of shape $\alpha$.

\begin{example}\label{ex:lr} The tableau $T$ from Example \ref{ex:ssyt} is an LR tableau. The tableau $U$ from Example \ref{ex:ssyt} is not an LR tableau because in the sequence $2212321$, the initial substring $2$ contains more $2$'s than $1$'s. Note that the lattice word condition ensures that the content of any LR tableau is a partition. \end{example}

\begin{theorem}\label{thm:lrrule} \cite[Theorem A1.3.3]{enum} (Littlewood--Richardson rule) We have the following identity. $$r_\alpha=\sum_{T\in LR_\alpha}s_{\text{cont}(T)}$$
\end{theorem}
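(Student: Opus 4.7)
The plan is to derive this as a special case of the general Littlewood--Richardson rule for skew Schur functions, which is precisely what Stanley records as Theorem A1.3.3. The argument splits into two steps: identifying a ribbon as a skew shape, and then invoking the skew rule.

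First, I would exhibit the ribbon diagram of $\alpha = \alpha_1 \cdots \alpha_R$ as an honest skew diagram $\lambda/\mu$ by setting
$$\lambda_i = \sum_{j=i}^R \alpha_j - (R - i), \qquad \mu_i = \sum_{j=i+1}^R \alpha_j - (R - i),$$
with the convention $\mu_R = 0$. A direct calculation gives $\lambda_i - \lambda_{i+1} = \alpha_i - 1 \geq 0$ and $\mu_i - \mu_{i+1} = \alpha_{i+1} - 1 \geq 0$, so $\lambda$ and $\mu$ are partitions with $\mu \subseteq \lambda$. One then checks that row $i$ of $\lambda/\mu$ has exactly $\alpha_i$ cells and that the rightmost cell of row $i+1$ sits directly beneath the leftmost cell of row $i$, matching the definition of the ribbon diagram from Section 2.2. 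Under this identification, SSYTs of ribbon shape $\alpha$ correspond bijectively and content-preservingly to SSYTs of skew shape $\lambda/\mu$, so $r_\alpha = s_{\lambda/\mu}$.

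Second, I would invoke the classical skew Littlewood--Richardson rule $s_{\lambda/\mu} = \sum_\nu c^\lambda_{\mu\nu} s_\nu$, where $c^\lambda_{\mu\nu}$ counts LR tableaux of shape $\lambda/\mu$ and content $\nu$. Because the lattice word condition forces the content of any LR tableau to be a partition (as noted in the preceding example), grouping LR tableaux of ribbon shape $\alpha$ by their content immediately yields $r_\alpha = \sum_{T \in LR_\alpha} s_{\text{cont}(T)}$.

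The main obstacle is of course the general skew LR rule itself, which is a deep theorem with multiple standard proofs (jeu de taquin together with Sch\"utzenberger's involution, the RSK correspondence, or dual equivalence graphs). Since the paper defers to Stanley's EC2, my only original contribution here would be the skew-shape identification in the first step, which is essentially combinatorial bookkeeping. For a self-contained argument in the ribbon case I would favor the jeu de taquin approach: every SSYT of ribbon shape rectifies to a unique SSYT of some straight shape $\nu$ of the same content, and the LR tableaux of ribbon shape $\alpha$ are exactly the fibers over the distinguished Yamanouchi tableau in each straight shape, yielding the claimed Schur expansion. Ribbons are especially convenient here because at each column boundary only two cells meet, so the jeu de taquin slides are essentially trivial.
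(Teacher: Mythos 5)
Your proposal is correct and takes essentially the same route as the paper, which offers no proof of its own but simply cites the skew Littlewood--Richardson rule from Stanley's Theorem A1.3.3; your explicit identification of the ribbon of $\alpha$ with the skew shape $\lambda/\mu$ (where $\lambda_i=\sum_{j=i}^R\alpha_j-(R-i)$ and $\mu_i=\sum_{j=i+1}^R\alpha_j-(R-i)$) checks out and supplies the routine bookkeeping the paper leaves implicit. The only substantive content, the skew rule itself, is deferred to the same reference in both cases, so there is nothing further to reconcile.
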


\begin{example}\label{ex:lrrule} The three LR tableaux of shape $\alpha=313$ and the two LR tableaux of shape $\beta=412$ are given below. By the lattice word condition, the rightmost integer must be a $1$ and because the rows must be weakly increasing, the top row must be all $1$'s. In addition, the rightmost column of length at least $2$ must be filled with the integers $1, 2, 3$ in increasing order because the column must be strictly increasing and a $2$ must be read before a $3$.
$$\tableau{& &\ 1&1&1\\ & &\ 2\\1&1&3}\hspace{15pt}\tableau{& &\ 1&1&1\\ & &\ 2\\1&2&3}\hspace{15pt}\tableau{& &\ 1&1&1\\ & &\ 2\\2&2&3}\hspace{15pt}\tableau{& \ 1&1&1&1\\ &\ 2\\1&3}\hspace{15pt}\tableau{& \ 1&1&1&1\\ &\ 2\\2&3}$$
Therefore, by the Littlewood--Richardson rule, we have $$r_{313}=s_{511}+s_{421}+s_{331}\hbox{ and }r_{412}=s_{511}+s_{421}.$$ Note that $$r_{313}-r_{412}=s_{331}.$$ \end{example}

By collecting LR tableaux by content, Theorem \ref{thm:lrrule} can equivalently be stated as $$r_\alpha=\sum_\nu c_{\alpha,\nu}s_\nu,$$ where the \emph{Littlewood--Richardson (LR) coefficient} $c_{\alpha,\nu}$ is the number of LR tableaux of shape $\alpha$ and content $\nu$. Because $c_{\alpha,\nu}\geq 0$, we see that $r_\alpha$ is Schur-positive.\\

Our second main combinatorial tool explores the relationship between Schur functions, ribbon Schur functions, and the basis of complete homogeneous symmetric functions, which we now introduce. For an integer $n$ define the \emph{$n$-th complete homogeneous symmetric function} $$h_n=\sum_{i_1\leq\cdots\leq i_n}x_{i_1}\cdots x_{i_n}.$$ We also set $h_0=1$ and $h_n=0$ when $n<0$. Now for a partition $\nu$ we define the \emph{complete homogeneous symmetric function} $$h_\nu=h_{\nu_1}\cdots h_{\nu_{\ell(\nu)}}.$$

The complete homogeneous symmetric functions $\{h_\nu: \ \nu\hbox{ a partition}\}$ form a basis for the algebra of symmetric functions $\Lambda$ \cite[Corollary 7.6.2]{enum}, hereafter abbreviated as the \emph{$h$-basis}.

\begin{theorem}\cite[Theorem 7.16.1]{enum} \label{thm:jt} (Jacobi--Trudi identity) We have the following identity. $$s_\nu=\det(h_{\nu_i-i+j})_{i,j}$$ \end{theorem}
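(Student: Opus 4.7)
The plan is to prove the Jacobi--Trudi identity via the Lindström--Gessel--Viennot (LGV) lemma together with the combinatorial description of $s_\nu$ as the generating function of SSYTs of shape $\nu$.

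First I would interpret $h_m$ as a weighted count of lattice paths. Fix a large integer $M$. Consider paths in $\bZ^2$ from $(a,0)$ to $(b,M)$ using unit east and unit north steps, assigning each east step taken at height $y$ the weight $x_y$ and each north step the weight $1$. Recording the heights $1\leq y_1\leq\cdots\leq y_{b-a}\leq M$ at which the east steps occur shows the total weight of such paths is $h_{b-a}(x_1,\ldots,x_M)$, and the identity we seek holds in the full ring $\Lambda$ by letting $M\to\infty$.

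Next I would set up the LGV configuration. Let $n=\ell(\nu)$ and place sources $A_i=(-i,0)$ and sinks $B_j=(\nu_j-j,M)$ for $1\leq i,j\leq n$. The weighted count of paths from $A_i$ to $B_j$ is then $h_{\nu_j-j+i}$, so the matrix $(h_{\nu_j-j+i})_{i,j}$ is the transpose of the Jacobi--Trudi matrix, and the two determinants agree. Applying LGV, $\det(h_{\nu_i-i+j})$ equals the signed weighted sum $\sum_\sigma \mathrm{sgn}(\sigma)\sum_{(P_1,\ldots,P_n)}\prod_i w(P_i)$ over families of paths $P_i\colon A_{\sigma(i)}\to B_i$; contributions from families whose paths share a vertex cancel in pairs via the standard tail-swap involution. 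Because the $A_i$ march leftward and the values $\nu_j-j$ are strictly decreasing in $j$ (since $\nu$ is a partition), any surviving non-intersecting family must correspond to the identity permutation, so the determinant equals the total weight of non-intersecting families $P_i\colon A_i\to B_i$.

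Finally I would identify non-intersecting families with SSYTs of shape $\nu$: for each path from $A_i$ to $B_i$, the $\nu_i$ east-step heights form a weakly increasing word which is placed in row $i$. The non-intersection of the paths $A_i\to B_i$ and $A_{i+1}\to B_{i+1}$ translates exactly into strict column-increase between rows $i$ and $i+1$, since a failure of strict inequality in some column would force the two paths to meet at the corresponding lattice point. Summing weights gives $\sum_T x^{\text{cont}(T)}=s_\nu$, completing the proof. The main obstacle is the careful verification of this equivalence between the non-intersection condition on paths and the column-strictness of tableaux; this is the heart of the LGV-to-SSYT bijection and, while standard, is the only step that requires real combinatorial bookkeeping rather than a direct algebraic manipulation.
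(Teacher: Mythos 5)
The paper does not prove this statement: it is quoted as a classical result with a citation to Stanley's \emph{Enumerative Combinatorics, Vol.\ 2}, Theorem 7.16.1, so there is no internal proof to compare against. Your Lindstr\"om--Gessel--Viennot argument is correct and is essentially the standard proof appearing in that cited reference: interpret $h_m$ as a generating function for lattice paths, place sources $A_i=(-i,0)$ and sinks $B_j=(\nu_j-j,M)$ so that the path matrix is the (transposed) Jacobi--Trudi matrix, cancel intersecting families by the tail-swap involution, observe that compatibility of the orderings of sources and sinks forces the identity permutation, and identify non-intersecting families with SSYTs of shape $\nu$ via the east-step heights. The only blemish is cosmetic: with paths from $(a,0)$ to $(b,M)$, east steps can occur at height $0$, so your claim that the heights satisfy $1\leq y_1\leq\cdots\leq y_{b-a}\leq M$ needs a one-line fix (weight an east step at height $y$ by $x_{y+1}$, or start the sources at height $1$); this does not affect the argument. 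You are also right that the equivalence between non-intersection of consecutive paths and column-strictness is the one step demanding genuine bookkeeping, and your sketch of it is the correct one.
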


For a composition $\alpha$ we define $$\mathcal{M}(\alpha)=\{\lambda(\beta): \ \beta\geq_{coar}\alpha\}$$ to be the multiset of partitions determined by coarsenings of $\alpha$. For a multiset $M$ we denote by $m_M(x)$ the multiplicity of $x$ in $M$.

\begin{theorem}\cite[Equation (2.6)]{ribeq} \label{thm:jtrib} We have the following identity. $$r_\alpha=\sum_{\nu\in\mathcal{M}(\alpha)} (-1)^{R-\ell(\nu)}h_\nu$$ \end{theorem}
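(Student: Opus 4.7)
My approach is to derive the identity from the Jacobi--Trudi determinant for skew Schur functions, realizing $r_\alpha$ as $s_{\lambda/\mu}$ for the ribbon diagram $\lambda/\mu$ of $\alpha$. Concretely, setting $\lambda_i=\alpha_i+\cdots+\alpha_R-(R-i)$ and $\mu_i=\lambda_{i+1}-1$ (with $\mu_R=0$) yields a skew diagram whose $i$th row has $\alpha_i$ cells and overlaps the next row in exactly one column, matching the ribbon of Section~\ref{subsec:diagrams}. A direct calculation shows that $\lambda_i-\mu_j-i+j$ equals $\alpha_i+\alpha_{i+1}+\cdots+\alpha_j$ when $i\leq j$, equals $0$ when $i=j+1$, and is strictly negative when $i>j+1$. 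Hence the skew Jacobi--Trudi identity gives
$$r_\alpha=\det(M),\qquad M_{ij}=\begin{cases} h_{\alpha_i+\alpha_{i+1}+\cdots+\alpha_j}&\text{if }i\leq j,\\ 1&\text{if }i=j+1,\\ 0&\text{if }i>j+1.\end{cases}$$

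Next I would expand $\det(M)=\sum_\sigma \mathrm{sgn}(\sigma)\prod_i M_{i,\sigma(i)}$ and observe that only permutations with $\sigma(i)\geq i-1$ for every $i$ contribute nonzero terms. A short check shows that such $\sigma$ are precisely those whose cycles are intervals $\{a,a+1,\ldots,b\}$ of $\{1,\ldots,R\}$, with $\sigma$ acting as the cycle $(a\ b\ b-1\ \cdots\ a+1)$ on each. These permutations are therefore in natural bijection with decompositions of $\{1,\ldots,R\}$ into consecutive blocks, and hence with coarsenings $\beta=(\beta_1,\ldots,\beta_k)$ of $\alpha$, where the block sums give the parts of $\beta$.

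Finally, the cycle on block $\{a,\ldots,b\}$ contributes the sign $(-1)^{b-a}$ together with $M_{a,b}=h_{\alpha_a+\cdots+\alpha_b}$ and a string of $1$'s from the subdiagonal entries. Multiplying over the blocks of a coarsening $\beta$ of length $k=\ell(\beta)$ yields $(-1)^{R-k}h_{\beta_1}\cdots h_{\beta_k}=(-1)^{R-\ell(\beta)}h_{\lambda(\beta)}$, and summing over $\beta$ gives $\sum_{\nu\in\mathcal{M}(\alpha)}(-1)^{R-\ell(\nu)}h_\nu$. The main obstacle is really just the bookkeeping in the first two steps: verifying that the entries of $M$ simplify as claimed and characterizing the contributing permutations. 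A more conceptual alternative would be to first prove the dual expansion $h_\alpha=\sum_{\beta\geq_{coar}\alpha}r_\beta$ combinatorially, by classifying each vertical junction between consecutive rows of a filling of the disjoint rows of $\alpha$ as either column-strict or not, and then apply M\"obius inversion on the boolean lattice of coarsenings of $\alpha$, whose M\"obius function is $(-1)^{R-\ell(\beta)}$.
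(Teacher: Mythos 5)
The paper offers no proof of this statement at all---it is quoted from \cite[Equation (2.6)]{ribeq}---so there is nothing internal to compare your argument against; what you have written is a correct, self-contained proof, and either of your two routes would do. Your main route checks out in detail: with $\lambda_i=\alpha_i+\cdots+\alpha_R-(R-i)$ and $\mu_j=\lambda_{j+1}-1$ one indeed gets $\lambda_i-\mu_j-i+j=\alpha_i+\cdots+\alpha_j$ (an empty sum when $i=j+1$, negative when $i>j+1$), so the skew Jacobi--Trudi matrix is upper Hessenberg with $1$'s on the subdiagonal; the contributing permutations are exactly those with $\sigma(i)\geq i-1$, and your characterization of their cycles as intervals $\{a,\ldots,b\}$ with $\sigma(a)=b$ and $\sigma(i)=i-1$ otherwise is right (starting from the minimum $a$ of a nontrivial cycle, the unique admissible preimage of $a$ is $a+1$, and one walks up the interval). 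Each such permutation contributes $(-1)^{R-k}h_{\beta_1}\cdots h_{\beta_k}=(-1)^{R-\ell(\beta)}h_{\lambda(\beta)}$ for the corresponding coarsening $\beta$, which is precisely the multiset sum over $\mathcal{M}(\alpha)$. Your alternative route---proving $h_\gamma=\sum_{\beta\geq_{coar}\gamma}r_\beta$ combinatorially and inverting on the Boolean lattice of the $R-1$ gaps, whose M\"obius function is $(-1)^{R-\ell(\beta)}$---is also valid and is closer in spirit to how the cited source derives the identity; it has the advantage of not invoking the skew Jacobi--Trudi theorem, at the cost of needing the bijective proof of the $h$-to-ribbon expansion. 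The only bookkeeping point worth making explicit in a write-up is that $r_\alpha$ equals the skew Schur function of the ribbon shape, which is immediate from the paper's tableau definitions.
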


\begin{example}\label{ex:jt} For $\nu=331$ we expand $s_\nu$ in the $h$-basis as $$s_{331}=\det\left(\begin{matrix}h_3 & h_4 & h_5 \\ h_2 & h_3 & h_4 \\ 0 & 1 & h_1 \end{matrix} \right)=h_{331}-h_{421}-h_{43}+h_{52}.$$
For $\alpha=313$ and $\beta=412$ we have $$\mathcal{M}(\alpha)=\{331, 43, 43, 7\}, \ \mathcal{M}(\beta)=\{421, 43, 52, 7\},\hbox{ and }m_{\mathcal{M}(\alpha)}(43)=2.$$ From these multisets of coarsenings, we see that $$r_{313}=h_{331}-2h_{43}+h_7\hbox{ and }r_{412}=h_{421}-h_{43}-h_{52}+h_7.$$ Once again, we have $$r_{313}-r_{412}=h_{331}-h_{421}-h_{43}+h_{52}=s_{331}.$$\end{example}

Note that $$\lambda(\alpha_1\cdots\alpha_{j-1}(\alpha_j+\alpha_{j+1})\alpha_{j+2}\cdots\alpha_R)>_{lex}\lambda(\alpha_1\cdots\alpha_R)$$ as the partitions will first differ at the smallest $i$ for which $\lambda(\alpha)_i<\alpha_j+\alpha_{j+1}=\lambda(\alpha_1\cdots(\alpha_j+\alpha_{j+1})\cdots\alpha_R)_i$. Therefore, by induction and Theorem \ref{thm:jtrib}, the lexicographically smallest term in the $h$-basis expansion of $r_\alpha$ is $h_{\lambda(\alpha)}$. Moreover, it is a straightforward consequence \cite[Lemma 66]{ribposnecmax} of Theorem \ref{thm:jt} that the lexicographically smallest term of the Schur function expansion of $h_\mu$ is $s_\mu$. Therefore, for some coefficients $b_\mu$ we can write \begin{equation}\label{eq:alslex}r_\alpha=s_{\lambda(\alpha)}+\sum_{\mu>_{lex}\lambda(\alpha)}b_\mu s_\mu.\end{equation} We have the immediate well-known corollary that if $r_\alpha\geq_s r_\beta$, then $\lambda(\alpha)\leq_{lex}\lambda(\beta)$. In fact, this all holds for the dominance order as well, but we will not need this stronger notion.\\

The equation $$r_{313}-r_{412}=s_{331}$$ from Examples \ref{ex:lrrule} and \ref{ex:jt} exhibits the curious situation of two ribbon Schur functions that differ by a single Schur function. This paper aims to classify all those $\alpha$, $\beta$, and $\nu$ for which \begin{equation}\label{eq:ne} r_\alpha-r_\beta=s_\nu.\end{equation}

Such a \emph{near-equality of ribbon Schur functions} would be a cover relation in the Schur-positivity partial order and this relationship is the next most elementary one to investigate after the equality of ribbon Schur functions was classified \cite[Theorem 4.1]{ribeq}. \\

If \eqref{eq:ne} holds, then in particular $r_\alpha\geq_sr_\beta$. It follows \cite[Lemma 3.8]{ribpossupport} that $\alpha$ and $\beta$ must have the same size $N$ and the same length $R$, which is why we make these assumptions, as well as that $\alpha\neq 1^R$. By the Littlewood--Richardson rule, we have \begin{equation} \label{eq:nelr} c_{\alpha,\mu}=\begin{cases} c_{\beta,\mu} & \mu\neq \nu\\ c_{\beta,\mu}+1 & \mu=\nu \end{cases}\end{equation}so one way of studying near-equality is by enumerating LR tableaux. Additionally, if we expand \eqref{eq:ne} in the $h$-basis using Theorem \ref{thm:jt} and Theorem \ref{thm:jtrib}, we see that $s_\nu$ prescribes exactly how the multisets $\mathcal{M}(\alpha)$ and $\mathcal{M}(\beta)$ differ. To be precise, if $s_\nu=\sum_\mu c_\mu h_\mu$, then \begin{equation} \label{eq:nejt} m_{\mathcal{M}(\alpha)}(\mu)-m_{\mathcal{M}(\beta)}(\mu)=(-1)^{R-\ell(\mu)}c_\mu.\end{equation}

Finally, we introduce our third tool from symmetric function theory.

\begin{theorem} \cite[Theorem 7.15.6]{enum}
There is an involutive isomorphism $\omega$ on the algebra $\Lambda$ of symmetric functions, which satisfies $$\omega(r_\alpha)=r_{\alpha^t}\hbox{ and }\omega(s_\nu)=s_{\nu'}.$$
\end{theorem}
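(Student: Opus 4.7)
The plan is to construct $\omega$ directly on the generators of $\Lambda$ and then verify its action on the two target families. Since $\Lambda$ is a polynomial algebra freely generated by $h_1, h_2, \ldots$ (see \cite[Corollary 7.6.2]{enum}), one may define $\omega$ as the unique algebra endomorphism sending $h_n \mapsto e_n$, where $e_n$ denotes the $n$-th elementary symmetric function. I would first show that $\omega$ is an involution by checking $\omega(e_n) = h_n$: this follows by induction from the fundamental Newton-type relation $\sum_{i=0}^n (-1)^i h_i e_{n-i} = 0$ valid for $n \geq 1$, since applying $\omega$ to this identity interchanges the roles of $h$ and $e$ and forces $\omega(e_n) = h_n$ inductively.

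For the Schur identity $\omega(s_\nu) = s_{\nu'}$, I would apply $\omega$ directly to the Jacobi--Trudi formula of Theorem \ref{thm:jt}, obtaining $\omega(s_\nu) = \det(e_{\nu_i - i + j})$. The dual Jacobi--Trudi (N\"agelsbach--Kostka) identity, provable by the same lattice-path/sign-reversing argument as Jacobi--Trudi but applied to column-strict fillings, asserts that this determinant equals $s_{\nu'}$. For the ribbon identity $\omega(r_\alpha) = r_{\alpha^t}$, the conceptual route is to realize $r_\alpha$ as the skew Schur function $s_{\lambda/\mu}$ whose skew shape is exactly the ribbon diagram of $\alpha$. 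The general fact $\omega(s_{\lambda/\mu}) = s_{\lambda'/\mu'}$ then reduces the claim to checking that the conjugate skew shape $\lambda'/\mu'$ coincides with the ribbon diagram of $\alpha^t$, which matches the explicit description of $\alpha^t$ (reflection across the northwest-southeast diagonal) given in Section \ref{subsec:diagrams}.

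The main obstacle is the diagrammatic bookkeeping in that last step: one must verify that reflecting the skew shape of a ribbon across the diagonal really produces the ribbon of $\alpha^t$ as defined, including correct handling of the parts of length $1$ that lengthen the adjacent columns. A purely algebraic alternative is to apply $\omega$ to Theorem \ref{thm:jtrib} to obtain $\omega(r_\alpha) = \sum_{\nu \in \mathcal{M}(\alpha)} (-1)^{R-\ell(\nu)} e_\nu$ and then match this with the analogous expansion of $r_{\alpha^t}$ after re-expanding each $e_\nu$ in the $h$-basis via the same Newton relation; this avoids invoking skew Schur functions but becomes a more intricate cancellation argument. In either approach, once the matching is established, the remaining manipulations are routine applications of Jacobi--Trudi and the single relation between the $h$- and $e$-bases.
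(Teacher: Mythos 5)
The paper offers no proof of this statement: it is imported verbatim from Stanley \cite[Theorem 7.15.6]{enum}, so there is no internal argument to compare against. Your outline is the standard textbook proof and is correct: define $\omega$ on the free generators by $\omega(h_n)=e_n$, deduce $\omega(e_n)=h_n$ (hence involutivity) by induction from the relation $\sum_{i=0}^n(-1)^i e_i h_{n-i}=0$, get $\omega(s_\nu)=s_{\nu'}$ by applying $\omega$ to the Jacobi--Trudi determinant and invoking the dual (N\"agelsbach--Kostka) identity, and get $\omega(r_\alpha)=r_{\alpha^t}$ from the skew identity $\omega(s_{\lambda/\mu})=s_{\lambda'/\mu'}$ applied to the ribbon skew shape. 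One remark on the ``main obstacle'' you flag: in this paper $\alpha^t$ is \emph{defined} as the composition whose ribbon diagram is the reflection of that of $\alpha$ across the diagonal, and conjugation of a skew shape is exactly that reflection, so the diagrammatic bookkeeping you worry about is vacuous here (the explicit formula for $\alpha^t$ in terms of $p'$ and the $z_i$ is derived from the reflection, not the other way around). The honest caveat is that the dual Jacobi--Trudi identity and its skew analogue are themselves substantive theorems, so your argument is a reduction to known results rather than a self-contained proof---which is entirely appropriate for a statement the paper itself takes on citation.
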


In particular, from a near-equality of ribbon Schur functions $$r_\alpha-r_\beta=s_\nu,$$ we can apply the $\omega$ involution to derive a new near-equality $$r_{\alpha^t}-r_{\beta^t}=s_{\nu'}.$$

\begin{example}
By applying the $\omega$ involution to the equation $$r_{313}-r_{412}=s_{331}$$ from Examples \ref{ex:lrrule} and \ref{ex:jt}, we find that $$r_{11311}-r_{13111}=s_{322}.$$
\end{example}

We can also prove the following proposition.\begin{proposition} \label{prop:sedelta} Suppose that $\lambda(\alpha)=\lambda(\beta)$ and $r_\alpha\geq_s r_\beta$. Then $\delta_\alpha\geq\delta_\beta$, and if $\delta_\alpha=\delta_\beta$, then $q'(\alpha)\leq_{lex}q'(\beta)$. \end{proposition}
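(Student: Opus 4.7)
The plan is to promote the lex corollary of \eqref{eq:alslex} to the dominance order, then push the hypothesis through the $\omega$ involution and a conjugation to extract the desired inequalities on $\delta$ and $q'$.

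First I would invoke the dominance strengthening of the corollary (which the paper flags right after \eqref{eq:alslex}): any coarsening $\gamma$ of $\alpha$ satisfies $\lambda(\gamma) \geq_{dom} \lambda(\alpha)$, since merging adjacent parts only shifts mass to earlier positions, and Jacobi--Trudi gives the unitriangular expansion $h_\mu \in s_\mu + \mathrm{span}\{s_\nu : \nu >_{dom}\mu\}$. Combined with Theorem \ref{thm:jtrib}, this refines \eqref{eq:alslex} to a dominance-triangular expansion, and the same cancellation argument used for the lex corollary then shows that $r_\alpha \geq_s r_\beta$ forces $\lambda(\alpha) \leq_{dom} \lambda(\beta)$.

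Applying $\omega$ to the hypothesis gives $r_{\alpha^t} \geq_s r_{\beta^t}$, so the dominance corollary applied to $(\alpha^t,\beta^t)$ yields $\lambda(\alpha^t) \leq_{dom} \lambda(\beta^t)$. Conjugating partitions reverses dominance, producing $\lambda(\alpha^t)' \geq_{dom} \lambda(\beta^t)'$. Reading \eqref{eq:parttranspose} as multiplicities of parts shows
\[
\lambda(\alpha^t)'_1 = N-R+1,\quad \lambda(\alpha^t)'_2 = R-k-1+\delta_\alpha,\quad \lambda(\alpha^t)'_{i+2} = \sum_{j \geq i} q'_j(\alpha)\ (i \geq 0),
\]
and identically for $\beta$. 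The dominance inequality $\sum_{i \leq K}\lambda(\alpha^t)'_i \geq \sum_{i \leq K}\lambda(\beta^t)'_i$ at $K = 2$ collapses to $\delta_\alpha \geq \delta_\beta$ after the common $N-R+1$ term cancels.

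For the second assertion, I would assume $\delta_\alpha = \delta_\beta$ and induct on $j \geq 0$. Suppose $q'_i(\alpha) = q'_i(\beta)$ for all $i < j$. Summing the formulas above, the partial sum $\sum_{i \leq K}\lambda(\alpha^t)'_i$ at $K = j+3$ contains $q'_i(\alpha)$ with coefficient $-(j+1-i)$ for $i \leq j+1$, so in particular the coefficient of $q'_{j+1}$ vanishes. After cancelling all inductively equal terms the dominance inequality collapses to $-(q'_j(\alpha)-q'_j(\beta)) \geq 0$, i.e.\ $q'_j(\alpha) \leq q'_j(\beta)$. This completes the induction, giving $q'(\alpha) \leq_{lex} q'(\beta)$. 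The main technical step is the dominance refinement of \eqref{eq:alslex}, which is classical; the remainder is just bookkeeping on cumulative sums arising from \eqref{eq:parttranspose}.
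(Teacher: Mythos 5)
Your proof is correct and follows essentially the same route as the paper's: apply the $\omega$ involution, conjugate, and read the conclusion off the explicit formula for $\lambda(\alpha^t)'$ coming from \eqref{eq:sumqj} and \eqref{eq:parttranspose}. The only difference is that you upgrade to the dominance order (which the paper explicitly notes holds but declines to use); since dominance implies lexicographic order, you could skip the dominance refinement and the partial-sum induction entirely and read off $\delta_\alpha\geq\delta_\beta$ and $q'(\alpha)\leq_{lex}q'(\beta)$ directly from $\lambda(\alpha^t)'\geq_{lex}\lambda(\beta^t)'$, as the paper does.
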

\begin{proof}
If $r_\alpha\geq_s r_\beta$, then by applying the $\omega$ involution to the Schur-positive difference $r_\alpha-r_\beta$, we find that $r_{\alpha^t}\geq_s r_{\beta^t}$. We have seen from \eqref{eq:alslex} that therefore $\lambda(\alpha^t)\leq_{lex}\lambda(\beta^t)$. Also, because conjugation reverses the lexicographic order on partitions \cite[Section 7.2]{enum}, we must have $\lambda(\alpha^t)'\geq_{lex}\lambda(\beta^t)'$. Finally, from \eqref{eq:sumqj} and \eqref{eq:parttranspose} we have our formula for the conjugate $$\lambda(\alpha^t)'=(N-R+1)(R-k-1+\delta_\alpha)(R-k-1+\delta_\alpha-q'_0(\alpha))(R-k-1+\delta_\alpha-q'_0(\alpha)-q'_1(\alpha))\cdots,$$ from which our conclusion now follows.
\end{proof}

\section{Main theorem}\label{sec:main}

We begin by stating our main theorem, which classifies all near-equalities of ribbon Schur functions for which the partition $\nu$ has at most two parts at least $2$.

\begin{theorem}\label{thm:main}
Suppose that $\nu_3\leq 1$, that is, $\nu=ab1^d$ for some $a$, $b$, $d$. Then $r_\alpha-r_\beta=s_\nu$ if and only if $b\geq 2$ and $\alpha$ and $\beta$ are (up to reversal) as in one of the following five cases.
\begin{align}\label{case:1}
\alpha&=b1^da& &\beta=(b-1)1^d(a+1) \\\label{case:2}
\alpha&=ab1^d& &\beta=(b-1)(a+1)1^d \\\label{case:3}
\alpha&=1^{d+1}a(b-1)& &\beta=1^{d+1}(b-1)a \\\label{case:4}
\alpha&=1a1^d(b-1)& &\beta=1(b-1)1^da\\\label{case:5}
\alpha&=(b-1)1^db(a-b+1)& &\beta=b1^d(b-1)(a-b+1)
\end{align}
\end{theorem}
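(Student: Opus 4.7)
The plan is to split the proof into a sufficiency direction (verifying each of the five families actually gives a near-equality $r_\alpha-r_\beta=s_\nu$) and a necessity direction (showing that any near-equality with $\nu=ab1^d$ forces $b\geq 2$ and puts $(\alpha,\beta)$ into one of the five families up to reversal).

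For sufficiency, I would expand both $r_\alpha$ and $r_\beta$ in the $h$-basis using Theorem \ref{thm:jtrib}, and expand $s_{ab1^d}$ via the Jacobi--Trudi identity (Theorem \ref{thm:jt}). In each of the five families the compositions $\alpha$ and $\beta$ are related by a single local move (swapping two adjacent parts, or sliding one part across a run of $1$'s), so most of the coarsenings in $\mathcal{M}(\alpha)$ and $\mathcal{M}(\beta)$ pair off and cancel; the handful of surviving terms can be matched directly against the Jacobi--Trudi expansion of $s_{ab1^d}$, which is a short alternating sum determined by $a$, $b$, and $d$. Small cases such as the identities $r_{313}-r_{412}=s_{331}$ from Examples \ref{ex:lrrule} and \ref{ex:jt} serve as useful sanity checks.

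For necessity, equation \eqref{eq:alslex} identifies the lex-smallest term of $r_\alpha-r_\beta$ as either $s_{\lambda(\alpha)}$ (when $\lambda(\alpha)<_{lex}\lambda(\beta)$) or some partition strictly above $\lambda(\alpha)=\lambda(\beta)$ (in the equal case), giving a clean split. In Case A, $\nu=\lambda(\alpha)=ab1^d$, so the multiset of parts of $\alpha$ equals $\{a,b,1^d\}$; up to reversal there are only a few orderings to check, and for each the lex-second $h$-term of $s_\nu$ forces $\lambda(\beta)=(a+1)(b-1)1^d$, which together with \eqref{eq:nejt} isolates $\beta$ as in cases \eqref{case:1} and \eqref{case:2}. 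In Case B, the common partition $\mu=\lambda(\alpha)=\lambda(\beta)$ is strictly lex-smaller than $\nu$; here Proposition \ref{prop:sedelta} restricts $\delta_\alpha,q'(\alpha)$ relative to their $\beta$-counterparts, and the $\omega$-dual near-equality $r_{\alpha^t}-r_{\beta^t}=s_{\nu'}$ gives complementary restrictions after transposition. Matching $\mu$ against the required lex-second $h$-support of $s_{ab1^d}$ then shows that $\mu$ must have the form $a(b-1)1^{d+1}$ (leading to cases \eqref{case:3}, \eqref{case:4}) or must be the partition determined by the multiset $\{a-b+1,b,b-1,1^d\}$ (leading to case \eqref{case:5}), and comparing $h$-coefficients via \eqref{eq:nejt} pins down the exact orderings.

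The exclusion of $b\leq 1$ falls out of the same analysis: each candidate $\beta$ arising from the forced forms in cases \eqref{case:1}--\eqref{case:5} acquires a zero part when $b=1$, so is no longer a composition. The main obstacle I anticipate is Case B: after all reductions by $\omega$ and reversal, many orderings of a shared multiset like $\{a,b-1,1^{d+1}\}$ or $\{b-1,b,a-b+1,1^d\}$ remain to be ruled out, and the $h$-coefficient bookkeeping becomes delicate. A systematic enumeration organized by the positions of the nontrivial parts relative to the runs of $1$'s, controlled throughout by Proposition \ref{prop:sedelta} and \eqref{eq:nejt}, should carry the argument through.
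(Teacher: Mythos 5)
Your high-level skeleton matches the paper's (sufficiency for each family, then necessity split by whether $\lambda(\alpha)=\lambda(\beta)$, using \eqref{eq:alslex}, \eqref{eq:nejt}, Proposition \ref{prop:sedelta} and the $\omega$ involution), and your sufficiency route via $h$-basis cancellation is a legitimate, if more laborious, alternative to the paper's bijective identity on LR tableaux (Theorem \ref{thm:specialmovingones}). But there is a genuine gap in your Case B, and you have half-noticed it yourself by calling it ``the main obstacle'' without supplying the missing idea. Two ingredients are absent. First, before any analysis of the length-$(R-1)$ $h$-terms can ``bite,'' one must know that $\mathcal{M}(\alpha)$ and $\mathcal{M}(\beta)$ actually differ at a partition of length $R-1$; the paper gets this from Proposition \ref{prop:ab1d} (a parity argument counting coarsenings containing an $(a+d+1)$ part, showing $e(\alpha)\neq e(\beta)$) together with Proposition \ref{prop:eap}. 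Without it you cannot conclude $R=d+3$ nor pin $\lambda(\alpha)$ to the two shapes you name, and the possibility $R>d+3$ with $ap(\alpha)=ap(\beta)$ is left open. Second, even granting $\lambda(\alpha)=a(b-1)1^{d+1}$, the arrangements $1^{i}a1^{j}(b-1)1^{l}$ with $i+j+l=d+1$ grow with $d$, so ``systematic enumeration'' is not an argument; the paper's way out is the LR-coefficient count of Lemmas \ref{lem:countends} and \ref{lem:compareends}, which shows that the coefficients $c_{\alpha,\mu(M,u)}$ detect the end parts and force $S'=\sum_{j\geq 1}q'_j\leq 1$ (all $1$'s consecutive up to lone end $1$'s), collapsing the enumeration to a handful of cases. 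Neither Proposition \ref{prop:sedelta} nor \eqref{eq:nejt} alone substitutes for this, since the $h$-expansion sees only the multiset of coarsenings and is comparatively insensitive to the placement of the runs of $1$'s.

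A smaller but real issue is your dismissal of $b\leq 1$: saying that the candidate $\beta$'s ``acquire a zero part when $b=1$'' presupposes that the necessity analysis produces those candidates in the $b=1$ case, but that analysis degenerates there (the term $-h_{(a+1)(b-1)1^d}$ loses a part since $h_0=1$). The paper instead rules out $\nu=a1^d$ directly (Proposition \ref{prop:nohooks}) by observing that $\alpha$ and $\beta$ have the same number $\binom{R-1}{d}$ of coarsenings of length $d+1$, contradicting \eqref{eq:nejt}; some such standalone argument is needed before the rest of your necessity analysis can safely assume $b\geq 2$.
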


By applying the $\omega$ involution, we immediately deduce the following corollary.

\begin{corollary} \label{cor:main}
Suppose that $\nu_2\leq 2$, that is, $\nu=a2^c1^d$ for some $a$, $c$, $d$. Then $r_\alpha-r_\beta=s_\nu$ if and only if $c\geq 1$ and $\alpha$ and $\beta$ are (up to reversal) as in one of the following five cases. 
\begin{align}\label{case:6}
\alpha&=1^{c+d}a1^c& &\beta=1^{c+d+1}a1^{c-1} \\\label{case:7}
\alpha&=(a-1)1^{c-1}21^{c+d}& &\beta=(a-1)1^{c+d}21^{c-1} \\\label{case:8}
\alpha&=1^{c-1}21^{c+d-1}a& &\beta=1^{c+d}21^{c-2}a \\\label{case:9}
\alpha&=1^{c-1}a1^{c+d-1}2& &\beta=1^{c+d}a1^{c-2}2 \\\label{case:10}
\alpha&=1^d21^{c-1}a1^{c-1}& &\beta=1^d21^{c-2}a1^c 
\end{align}
\end{corollary}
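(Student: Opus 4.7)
The plan is to deduce the corollary from Theorem \ref{thm:main} directly via the $\omega$ involution introduced at the end of Section \ref{subsec:rels}. Since $\omega$ is an involutive isomorphism with $\omega(r_\alpha)=r_{\alpha^t}$ and $\omega(s_\nu)=s_{\nu'}$, applying it turns every near-equality $r_\alpha-r_\beta=s_\nu$ into a near-equality $r_{\alpha^t}-r_{\beta^t}=s_{\nu'}$, and conversely. It therefore suffices to transpose each of the five cases of Theorem \ref{thm:main} and verify that the results match the five cases of the corollary.

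First, I would compute the conjugate of a partition of the form $\nu=a2^c1^d$ (necessarily with $a\geq 2$). Using $\nu'_j=|\{i:\nu_i\geq j\}|$ gives $\nu'=(1+c+d)(1+c)1^{a-2}$, which has at most two parts of size at least $2$, so Theorem \ref{thm:main} applies to the pair $(\alpha^t,\beta^t)$ under the substitution $(a_{\mathrm{thm}},b_{\mathrm{thm}},d_{\mathrm{thm}})=(1+c+d,\,1+c,\,a-2)$. The theorem's hypothesis $b_{\mathrm{thm}}\geq 2$ becomes exactly the corollary's $c\geq 1$.

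Next, I would transpose each of the five theorem cases back using the explicit formula
\[\alpha^t=(p'_{R-k+1}+2)1^{z_{R-k}-2}(p'_{R-k}+2)\cdots(p'_2+2)1^{z_1-2}(p'_1+2)\]
from Section \ref{subsec:diagrams}. For instance, in Case \eqref{case:1} the composition $\alpha_{\mathrm{thm}}=b_{\mathrm{thm}}1^{d_{\mathrm{thm}}}a_{\mathrm{thm}}$ has $z$-sequence $(b_{\mathrm{thm}},a_{\mathrm{thm}})$ and profile $(0,d_{\mathrm{thm}},0)$, so its transpose is $1\cdot 1^{a_{\mathrm{thm}}-2}(d_{\mathrm{thm}}+2)1^{b_{\mathrm{thm}}-2}\cdot 1=1^{a_{\mathrm{thm}}-1}(d_{\mathrm{thm}}+2)1^{b_{\mathrm{thm}}-1}$, which under the substitution becomes $1^{c+d}a1^c$, exactly the $\alpha$ of Case \eqref{case:6}. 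An analogous calculation on $\beta_{\mathrm{thm}}$ in each case produces the bijection \eqref{case:1}$\to$\eqref{case:6}, \eqref{case:2}$\to$\eqref{case:7}, \eqref{case:3}$\to$\eqref{case:8}, \eqref{case:4}$\to$\eqref{case:9}, \eqref{case:5}$\to$\eqref{case:10}.

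The main obstacle is just careful bookkeeping: in the transposition formula the leading factor $(p'_{R-k+1}+2)$ and trailing factor $(p'_1+2)$ equal $1$ whenever $\alpha_R\neq 1$ or $\alpha_1\neq 1$ respectively, and these must be merged with adjacent strings of ones so that the final exponents are collected correctly. This is most delicate in Case \eqref{case:5}, where the transposed ribbon carries two internal blocks of ones flanking the unique part of size $a$. One should also check that degenerate parameter values (for example $c=1$, which makes a block $1^{c-1}$ empty or forces a part equal to $1$ to merge with its neighbours) correspond consistently to the boundary instances of Theorem \ref{thm:main} that yield the same ribbon Schur function up to reversal. Once this bookkeeping is dispatched, both directions of the corollary's ``if and only if'' follow from Theorem \ref{thm:main} with no further argument, since $\omega^2=\mathrm{id}$.
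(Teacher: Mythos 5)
Your proposal is correct and is exactly the paper's argument: the paper derives Corollary \ref{cor:main} from Theorem \ref{thm:main} in one line by applying the $\omega$ involution, which is precisely what you do (you simply spell out the transposition bookkeeping that the paper leaves implicit). Your computed correspondence of cases and the identification of $b_{\mathrm{thm}}\geq 2$ with $c\geq 1$ check out.
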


The remainder of this section is devoted to proving Theorem \ref{thm:main}. Our strategy will be as follows. We first prove in Theorem \ref{thm:forwarddirection} the ``if'' direction, that is, we prove that if $\alpha$ and $\beta$ are as specified, then the near-equality $r_\alpha-r_\beta=s_{ab1^d}$ holds. For the converse, we first show in Proposition \ref{prop:nohooks} that $b\geq 2$. Then we show in Proposition \ref{prop:ab1d} that if $r_\alpha-r_\beta=s_{ab1^d}$, then we must have $e(\alpha)\neq e(\beta)$. Finally, by separately considering the cases where $\lambda(\alpha)\neq\lambda(\beta)$ and where $\lambda(\alpha)=\lambda(\beta)$, we show in Theorem \ref{thm:nelambda} and Theorem \ref{thm:ends} that the five cases of Theorem \ref{thm:main} are the only ones where $e(\alpha)\neq e(\beta)$. 

\subsection{Five families of near-equality}\label{subsec:forwarddirection} In order to prove that near-equality holds in our five families, we first prove a convenient identity for differences of ribbon Schur functions.

\begin{definition}\label{def:specialmovingones}
Let $i$ be minimal with $\alpha_i\geq 2$ and let $i<j\leq R$ and $1\leq t\leq\alpha_i-1$. Now define the composition $$M_{j,t}(\alpha)=\alpha_1\cdots\alpha_{i-1}(\alpha_i-t)\alpha_{i+1}\cdots\alpha_{j-1}(\alpha_j+t)\alpha_{j+1}\cdots\alpha_R,$$ that is, $M_{j,t}(\alpha)$ is formed from $\alpha$ by decrementing the $i$-th part by $t$ and incrementing the $j$-th part by $t$. In addition, define $A_{j,t}(\alpha)$ to be the set of LR tableaux $T$ of shape $\alpha$ such that for some $i\leq j'\leq j-1$ the number of $1$'s in the first $j'$ rows of $T$ does \emph{not} exceed the number of $2$'s in the first $(j'+1)$ rows of $T$ by at least $t$. Finally, define $B_{j,t}(\alpha)$ to be the set of LR tableaux $U$ of shape $M_{j,t}(\alpha)$ such that $$\hbox{if }U_{j,j_1}=\cdots=U_{j,j_1+t-1}=1,\hbox{ then }j\leq R-1\hbox{ and }U_{j,j_1+t}\geq U_{j+1,j_1},$$ where the leftmost cell of row $j$ in $U$ is in column $j_1$. In other words, if the $j$-th row of $U$ begins with $t$ $1$'s, then the $(t+1)$-th entry of this row must be greater than or equal to the rightmost entry of the row below.
\end{definition}

We are now ready to state our ribbon difference identity. We will work through an example before supplying the proof.

\begin{theorem}\label{thm:specialmovingones}

Let $i$ be minimal with $\alpha_i\geq 2$ and let $i<j\leq R$ and $1\leq t\leq\alpha_i-1$. Then we have the following identity. $$r_\alpha-r_{M_{j,t}(\alpha)}=\sum_{T\in A_{j,t}(\alpha)}s_{\text{cont}(T)}-\sum_{U\in B_{j,t}(\alpha)}s_{\text{cont}(U)}$$\end{theorem}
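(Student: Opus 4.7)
The proof proceeds via a content-preserving bijection. By the Littlewood--Richardson rule (Theorem~\ref{thm:lrrule}), the claimed identity is equivalent to
\[
\sum_{T \in LR_\alpha \setminus A_{j,t}(\alpha)} s_{\text{cont}(T)} \;=\; \sum_{U \in LR_{M_{j,t}(\alpha)} \setminus B_{j,t}(\alpha)} s_{\text{cont}(U)},
\]
and I will establish this by constructing an explicit content-preserving bijection
\[
\Phi \colon LR_\alpha \setminus A_{j,t}(\alpha) \longrightarrow LR_{M_{j,t}(\alpha)} \setminus B_{j,t}(\alpha).
\]

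The map $\Phi$ slides $t$ ones from row $i$ down to row $j$. Given $T \in LR_\alpha \setminus A_{j,t}(\alpha)$, since $\alpha_k = 1$ for every $k < i$, a short induction using the lattice word condition shows that row $k$ of every LR tableau of shape $\alpha$ contains the single entry $k$ for each such $k$. Applying the inequality $T \notin A_{j,t}(\alpha)$ at $j' = i$ then reduces to
\[
\#\{\text{ones in row } i \text{ of } T\} \;\geq\; t + \#\{\text{twos in row } i{+}1 \text{ of } T\} \;\geq\; t,
\]
so the leftmost $t$ entries of row $i$ of $T$ are all equal to $1$. I form $\Phi(T)$ by deleting these $t$ ones from row $i$, keeping the entries of rows $i+1,\ldots,j-1$ unchanged (they now sit in shifted positions within the ribbon $M_{j,t}(\alpha)$), and prepending $t$ ones to the left of row $j$. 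Content is preserved because the same $t$ ones are removed and reinserted.

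The key verification is that $\Phi(T) \in LR_{M_{j,t}(\alpha)} \setminus B_{j,t}(\alpha)$ and that $\Phi$ is invertible. Weak row increase, the column-strict conditions away from the $(i,i{+}1)$ junction, and the lattice word on prefixes ending within rows $1,\ldots,i$ are inherited from $T$. For the remaining prefixes, for each $j' \in [i, j-1]$ I consider the prefix of the reading word of $\Phi(T)$ consisting of rows $1, \ldots, j'$ followed by the rightmost cells of row $j'{+}1$ up to but not including the first $1$; the counts of ones and twos at this prefix equal $\#\{\text{ones in rows } 1..j' \text{ of } T\} - t$ and $\#\{\text{twos in rows } 1..j'{+}1 \text{ of } T\}$ respectively, so the lattice condition here \emph{exactly} reads as the $j'$-th inequality defining $T \notin A_{j,t}(\alpha)$. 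A parallel argument handles the column-strict inequality at the $(i,i{+}1)$ junction via the inequality at $j' = i$. That $\Phi(T) \notin B_{j,t}(\alpha)$ is immediate: row $j$ of $\Phi(T)$ begins with $t$ ones by construction, and either $j = R$, or the $(t{+}1)$-st entry of row $j$ of $\Phi(T)$ equals the original leftmost entry of row $j$ of $T$, which is strictly less than the rightmost entry of row $j{+}1$ by the column-strict condition in $T$, so the implication defining $B_{j,t}(\alpha)$ fails. The inverse $\Psi$ is forced: for $U \in LR_{M_{j,t}(\alpha)} \setminus B_{j,t}(\alpha)$, remove the leading $t$ ones of row $j$ of $U$ and prepend them to row $i$; the lattice word of $U$ now translates directly into the $A_{j,t}(\alpha)$ inequalities for $\Psi(U)$, while the failure of the $B_{j,t}(\alpha)$ implication for $U$ supplies the column-strict inequality at the $(j, j{+}1)$ junction of $\Psi(U)$. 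The main obstacle I anticipate is precisely this matching between reading-word prefixes of $\Phi(T)$ and the family of $A_{j,t}(\alpha)$ inequalities indexed by $j' \in [i, j-1]$; once that correspondence is set up, the rest of the verification is careful bookkeeping.
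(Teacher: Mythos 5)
Your proposal is correct and is essentially the paper's own proof: both construct the content-preserving bijection $LR_\alpha\setminus A_{j,t}(\alpha)\to LR_{M_{j,t}(\alpha)}\setminus B_{j,t}(\alpha)$ that slides $t$ ones from row $i$ down to the front of row $j$, match the lattice-word prefixes of the image against the inequalities defining $A_{j,t}(\alpha)$, and use the failure of the $B_{j,t}(\alpha)$ implication to supply column-strictness at the $(j,j{+}1)$ junction for the inverse. The bookkeeping you flag as the main obstacle is exactly the content of the paper's verification, and your reductions check out.
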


\begin{example} \label{ex:specialmovingones} Let $\alpha=1116311$, so that $i=4$, and let $j=5$ and $t=3$, so that $\beta=M_{5,3}(\alpha)=1113611$.\\

By the lattice word condition, the first four rows of any $T\in LR_\alpha$ must be filled as follows.

$$\tableau{&&&&&&&1\\&&&&&&&2\\&&&&&&&3\\&&1&1&1&1&1&4\\\ &\ &\ \\\ \\\ }$$

Now $A_{5,3}(\alpha)$ is the set of such $T$ for which the number of $1$'s in the first four rows does not exceed the number of $2$'s in the first five rows by at least three. As there are presently six $1$'s in the first four rows of $T$ and one $2$ in the first five rows, the fifth row must be filled with all $2$'s. The LR tableaux of $A_{5,3}(\alpha)$ are enumerated below.

$$\tableau{&&&&&&&1\\&&&&&&&2\\&&&&&&&3\\&&1&1&1&1&1&4\\2&2&2\\3\\4}\hspace{15pt} \tableau{&&&&&&&1\\&&&&&&&2\\&&&&&&&3\\&&1&1&1&1&1&4\\2&2&2\\3\\5} \hspace{15pt} \tableau{&&&&&&&1\\&&&&&&&2\\&&&&&&&3\\&&1&1&1&1&1&4\\2&2&2\\5\\6}$$

By the lattice word condition, the first four rows of any $U\in LR_\beta$ must be filled as follows.

$$\tableau{&&&&&&&1\\&&&&&&&2\\&&&&&&&3\\&&&&&1&1&4\\ \ & \ & \ &\ &\ &\ &\\\ \\\ }$$

Now $B_{5,3}(\alpha)$ is the set of such $U$ for which if $U_{5,1}=U_{5,2}=U_{5,3}=1$, then $U_{5,4}\geq U_{6,1}$. Because the fifth row of $U$ can have at most three numbers at least $2$, namely two $2$'s and one $5$, then we indeed have $U_{5,1}=U_{5,2}=U_{5,3}=1$ and $U_{5,4}=2$ and so $U_{6,1}=2$. The LR tableaux of $B_{5,3}(\alpha)$ are enumerated below.

$$\tableau{&&&&&&&1\\&&&&&&&2\\&&&&&&&3\\&&&&&1&1&4\\1&1&1&2&2&5&\\2\\3}\hspace{15pt} \tableau{&&&&&&&1\\&&&&&&&2\\&&&&&&&3\\&&&&&1&1&4\\1&1&1&2&2&5&\\2\\6} $$

Finally, by Theorem \ref{thm:specialmovingones}, the difference $r_\alpha-r_\beta$ is \begin{align*}r_{1116311}-r_{1113611}&=\sum_{T\in A_{5,3}(\alpha)}s_{\text{cont}(T)}-\sum_{U\in B_{5,3}(\alpha)}s_{\text{cont}(U)}\\&=(s_{6422}+s_{64211}+s_{641111})-(s_{64211}+s_{641111})=s_{6422}.\end{align*}

\end{example}

We will now prove Theorem \ref{thm:specialmovingones}.\\

\emph{Proof of Theorem \ref{thm:specialmovingones}.} For ease of notation, set $\beta=M_{j,t}(\alpha)$, $A=A_{j,t}(\alpha)$, and $B=B_{j,t}(\alpha)$. We will construct a content-preserving bijection $$f:(LR_\alpha\setminus A)\rightarrow (LR_\beta\setminus B),$$ from which it immediately follows that $$ r_\alpha-r_\beta=\sum_{T\in LR_\alpha} s_{\text{cont}(T)}-\sum_{U\in LR_\beta} s_{\text{cont}(U)}\\\nonumber=\sum_{T\in A}s_{\text{cont}(T)}-\sum_{U\in B}s_{\text{cont}(U)}.$$

Given an LR tableau $T\in LR_\alpha\setminus A$, we construct $f(T)$ as the tableau of shape $\beta$ where the $i$-th row is filled with $(\beta_i-1)$ $1$'s followed by an $i$, the $j$-th row is filled with $t$ $1$'s followed by the entries in the $j$-th row of $T$, and all other rows are filled as in $T$. Informally, we remove $t$ $1$'s from the $i$-th row of $T$ and append them to the front of the $j$-th row of $T$ to create $f(T)$.\\

As an illustration, if $\alpha=1116311$, $j=5$, and $t=3$ as in Example \ref{ex:specialmovingones}, and $T$ is the tableau to the left, then $f(T)$ is the tableau to the right. Informally, we move the $3$ red $1$'s.

$$\tableau{&&&&&&&1\\&&&&&&&2\\&&&&&&&3\\&&\textcolor{red}{1}&\textcolor{red}{1}&\textcolor{red}{1}&1&1&4\\1&2&5\\3\\6}\hspace{15pt} \tableau{&&&&&&&1\\&&&&&&&2\\&&&&&&&3\\&&&&&1&1&4\\\textcolor{red}{1}&\textcolor{red}{1}&\textcolor{red}{1}&1&2&5&\\3\\6} $$

We first check that $f(T)\in LR_\beta\setminus B$. By construction, $f(T)$ is of shape $\beta$ and has the same content as $T$. The $j$-th row of $f(T)$ is still weakly increasing because the $1$'s were added to the front. In the case that $i\geq 2$ and $t=\alpha_i-1$, we need to check that the rightmost column of $f(T)$ is still strictly increasing. However, because $T\notin A$, the number of $1$'s in the first $i$ rows of $T$, namely $\alpha_i$, must exceed the number of $2$'s in the first $(i+1)$ rows of $T$ by at least $t=\alpha_i-1$. So there is at most one $2$ in the first $(i+1)$ rows of $T$, which is in the second row, and so the rightmost entry of the $(i+1)$-th row is not a $2$ and must be an $(i+1)$. To check the lattice word condition, note that since the reading word of $f(T)$ differs from that of $T$, which is a lattice word, only by moving $t$ $1$'s from the $i$-th to the $j$-th row, it suffices to check that there are not too many $2$'s in this range. Now again since $T\notin A$, the number of $1$'s in $T$ exceeds the number of $2$'s by at least $t$ from the $i$-th row to the $j$-th row, and so indeed $f(T)\in LR_\beta$. Finally, $f(T)\notin B$ because the first $t$ entries of the $j$-th row of $f(T)$ are all $1$'s, and if $j<R$, then the $(t+1)$-th entry of the $j$-th row, which in $T$ was directly above the rightmost entry of the $(j+1)$-th row, cannot be greater than or equal to it.\\

Conversely, given an LR tableau $U\in LR_\beta\setminus B$, then we construct $f^{-1}(U)$ as the tableau of shape $\alpha$ where the $i$-th row is filled with $(\alpha_i-1)$ $1$'s followed by an $i$, the $j$-th row is filled with the rightmost $(\beta_j-t)$ entries of $U$, and all other rows are filled as in $U$. Because the first $t$ entries of the $j$-th row of $U$ are all $1$'s and the $(t+1)$-th entry of this row is strictly smaller than the rightmost entry of the row below, and because $t$ $1$'s were moved to the $i$-th row we have $f^{-1}(U)\in LR_\alpha\setminus A$. By construction, $f^{-1}(f(T))=T$ and $f(f^{-1}(U))=U$, so $f$ is a bijection.\qed\\ 

Now that we have Theorem \ref{thm:specialmovingones} at our disposal, we are able to calculate differences of ribbon Schur functions more efficiently.

\begin{theorem}\label{thm:forwarddirection} In each of the five cases of Theorem \ref{thm:main}, we have $r_\alpha-r_\beta=s_{ab1^d}$. \end{theorem}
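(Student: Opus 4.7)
The plan is to apply Theorem \ref{thm:specialmovingones} directly for cases 1--4, and to reduce case 5 to case 1 via the product identity
\[ h_c\, r_\gamma = r_{\gamma \cdot c} + r_{\gamma \vee c}, \]
where $\gamma \cdot c$ is the concatenation and $\gamma \vee c$ is the composition obtained by adding $c$ to the last part of $\gamma$. This identity is an immediate consequence of Theorem \ref{thm:jtrib}, obtained by partitioning the coarsenings of $\gamma \cdot c$ according to whether the final gap is merged and collecting signs.

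For cases 1--4, I would verify by direct computation that $\beta = M_{j,t}(\alpha)$ for the choices $(i, j, t) = (1,\, d+2,\, 1)$ in case 1, $(1,\, 2,\, a-b+1)$ in case 2, $(d+2,\, d+3,\, a-b+1)$ in case 3, and $(2,\, d+3,\, a-b+1)$ in case 4 (where $i$ is the minimal index with $\alpha_i \geq 2$). Then I would analyze the sets $A_{j,t}(\alpha)$ and $B_{j,t}(\alpha)$. The lattice word condition combined with column-strictness of the ribbon forces row $1$ of every LR tableau of $\alpha$ to be all $1$'s and each single-cell row in the interior to contain the unique value dictated by its position (namely $2, 3, \ldots$). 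With this skeleton determined, the defining constraint of $A_{j,t}(\alpha)$ pins down the multiplicity of $2$'s (and similarly of higher small values) in the remaining large row(s), leaving a single LR tableau $T_0$ whose content is $\nu = ab1^d$. A symmetric examination of $B_{j,t}(\alpha)$ shows that its defining condition forces the last row of $U$ to contain too many entries at least $2$ to be consistent with the partition condition on $\text{cont}(U)$ (using $a \geq b$), so $B_{j,t}(\alpha) = \emptyset$. Theorem \ref{thm:specialmovingones} then yields $r_\alpha - r_\beta = s_{ab1^d}$.

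Case 5 resists direct application of Theorem \ref{thm:specialmovingones} because $\alpha = (b-1)1^d b(a-b+1)$ and $\beta = b 1^d (b-1)(a-b+1)$ differ at the non-adjacent positions $1$ and $d+2$ in a way that no single operation $M_{j,t}$ realizes. Instead, I would set $\alpha_0 = (b-1) 1^d b$ and $\beta_0 = \alpha_0^* = b 1^d (b-1)$, so that $\alpha = \alpha_0 \cdot (a-b+1)$, $\beta = \beta_0 \cdot (a-b+1)$, and $r_{\alpha_0} = r_{\beta_0}$ by \cite[Theorem 4.1]{ribeq}. Applying the product identity to both sides gives
\[ r_\alpha - r_\beta = r_{\beta_0 \vee (a-b+1)} - r_{\alpha_0 \vee (a-b+1)} = r_{b 1^d a} - r_{(b-1) 1^d (a+1)}, \]
which is precisely the case 1 difference, hence equals $s_{ab 1^d}$.

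The main obstacle is the combinatorial analysis of $A_{j,t}(\alpha)$ and $B_{j,t}(\alpha)$ for cases 3 and 4, where $t = a-b+1$ can be large and one must track the lattice word condition through two adjacent large rows (rather than a single large row, as in case 1): in particular, one must show that only the extreme index $j'$ witnesses the failure in the definition of $A_{j,t}(\alpha)$, and that this constraint together with the lattice word condition completely determines the unique surviving tableau. Case 5, by contrast, becomes immediate once the product-formula reduction is in place.
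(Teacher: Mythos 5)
Your plan for Cases \eqref{case:1}--\eqref{case:4} is the paper's approach: the paper also applies Theorem \ref{thm:specialmovingones}, writes out only Case \eqref{case:1} in full (with exactly your parameters $j=d+2$, $t=1$, showing $A_{d+2,1}(\alpha)$ is a singleton of content $ab1^d$ and $B_{d+2,1}(\alpha)=\emptyset$), and declares the remaining cases ``similar.'' Your parameter choices for Cases \eqref{case:2}--\eqref{case:4} are correct, and your outline of the $A$/$B$ analysis is the right one; like the paper, you defer the detailed lattice-word bookkeeping, which you honestly flag as the main remaining work. Where you genuinely diverge is Case \eqref{case:5}. Your stated reason for the detour is wrong: the definition of $M_{j,t}$ does not require $i$ and $j$ to be adjacent, and in fact $\alpha=M_{d+2,1}(\beta)$ here (the minimal part of $\beta$ that is at least $2$ is $\beta_1=b$), so Theorem \ref{thm:specialmovingones} does apply directly with the roles of $\alpha$ and $\beta$ swapped, which is presumably the paper's intended ``similar'' argument. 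That said, your workaround is correct and arguably cleaner: the identity $h_c\,r_\gamma=r_{\gamma\cdot c}+r_{\gamma\vee c}$ does follow from Theorem \ref{thm:jtrib} by splitting coarsenings of $\gamma\cdot c$ according to whether the last gap is merged, and combined with $r_{\alpha_0}=r_{\alpha_0^*}$ it reduces Case \eqref{case:5} to Case \eqref{case:1} with no further tableau analysis, since $\beta_0\vee(a-b+1)=b1^da$ and $\alpha_0\vee(a-b+1)=(b-1)1^d(a+1)$. This reduction buys you one fewer case of combinatorial verification at the cost of importing one extra (standard) identity.
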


\begin{proof} Because the proofs are similar, we only prove Case \ref{case:1}, that is, when $\bm{\alpha=b1^da}$ and $\bm{\beta=(b-1)1^d(a+1)}$. Note that $\beta=M_{d+2,1}(\alpha)$ so we will apply Theorem \ref{thm:specialmovingones}.\\

Any tableau $T\in A_{d+2,1}(\alpha)$ has all $1$'s in its top row, the integers $1$ through $(d+2)$ in its column of length $(d+2)$, and only $1$'s and $2$'s in its remaining cells. The total number of $2$'s must be at most $b$ because there are $b$ $1$'s placed so far, and must be at least $b$ by definition of $A_{d+2,1}(\alpha)$, so there is a unique $T\in A_{d+2,1}(\alpha)$, which has content $\hbox{cont}(T)=ab1^d$. In the concrete case where $a=6$, $b=4$, and $d=2$, this tableau $T$ is given below.

$$T=\tableau{&&&&&& 1 & 1 & 1 & 1&\\ &&&&&& 2 \\ &&&&&& 3 \\ & 1 &1 &2 &2 &2 &4 }$$

Any tableau $U\in B_{d+2,1}(\alpha)$ has all $1$'s in its top row, the integers $1$ through $(d+2)$ in its column of length $(d+2)$, and only $1$'s and $2$'s in its remaining cells. The total number of $2$'s must be at most $(b-1)$ because there are $(b-1)$ $1$'s placed so far, and therefore there must be a $1$ in the leftmost cell because $(b-1)+1<a+1$. However, by definition of $B_{d+2,1}(\alpha)$, there cannot be a $1$ in the leftmost cell because $d+2=R$, so in fact $B_{d+2,1}(\alpha)$ is empty. Therefore, by Theorem \ref{thm:specialmovingones} we have $$r_{b1^da}-r_{(b-1)1^d(a+1)}=\sum_{T\in A_{d+2,1}(\alpha)}s_{\text{cont}(T)}-\sum_{U\in B_{d+2,1}(\alpha)}s_{\text{cont}(U)}=s_{ab1^d}.$$
\end{proof}

Now our goal for the remainder of this paper is to prove the converse that if $r_\alpha-r_\beta=s_{ab1^d}$ then $\alpha$ and $\beta$ must be (up to reversal) as in one of the five cases of Theorem \ref{thm:main}.

\subsection{Necessary conditions for near-equality} We begin by showing that if $r_\alpha-r_\beta=s_{ab1^d}$, then we must have $b\geq 2$. 

\begin{proposition} \label{prop:nohooks} We do not have $r_\alpha-r_\beta=s_{a1^d}$ for any $\alpha$ and $\beta$. 
\end{proposition}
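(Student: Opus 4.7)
The plan is to derive a contradiction by comparing the coefficients of $h_{(N)}$ (where $N = a + d$ is the common size of $\alpha$ and $\beta$) in the $h$-basis expansions of both sides of the hypothetical identity $r_\alpha - r_\beta = s_{a1^d}$.

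First I would note that the one-part partition $(N)$ appears in $\mathcal{M}(\alpha)$ with multiplicity exactly $1$, namely via the unique total coarsening that sums all parts of $\alpha$ into a single part, and similarly $(N) \in \mathcal{M}(\beta)$ with multiplicity $1$. By Theorem \ref{thm:jtrib}, the coefficient of $h_{(N)}$ in $r_\alpha$ is therefore $(-1)^{R-1}$, and the same holds for $r_\beta$. Since $\alpha$ and $\beta$ have the same length $R$, the coefficient of $h_{(N)}$ in the left-hand side $r_\alpha - r_\beta$ is $0$.

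Next I would show that the coefficient of $h_{(N)}$ in $s_{a1^d}$ equals $(-1)^d$. Using the Jacobi--Trudi identity (Theorem \ref{thm:jt}), expand $s_{a1^d}$ as the determinant of the $(d+1) \times (d+1)$ matrix whose $(i,j)$-entry is $h_{\nu_i - i + j}$ with $\nu = (a, 1^d)$. Since $h_{(N)} = h_N$ is a single $h$, any summand of the determinant that is equal to $h_{(N)}$ must arise from selecting the entry $h_N$ at exactly one position and $h_0 = 1$ at all others. The only entry equal to $h_N$ in the matrix is at position $(1, d+1)$; for each row $i \geq 2$, the only entry equal to $h_0$ is at column $i-1$. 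Hence there is precisely one permutation contributing, namely the $(d+1)$-cycle sending $1 \mapsto d+1$ and $i \mapsto i-1$ for $i \geq 2$, whose sign is $(-1)^d$. This gives coefficient $(-1)^d$.

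Equating the two computations yields $0 = (-1)^d$, a contradiction, which proves the proposition. The argument is short, and the only step that requires any care is the Jacobi--Trudi computation; this is elementary once one observes that a product of $h$'s equals $h_{(N)}$ only when a single factor is $h_N$ and all others are $h_0$, so no significant obstacle is expected.
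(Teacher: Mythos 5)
Your proof is correct. It follows the same basic strategy as the paper's — derive a contradiction from the $h$-basis expansion by comparing, at partitions of one fixed length, the multiplicities in $\mathcal{M}(\alpha)$ and $\mathcal{M}(\beta)$ against the Jacobi--Trudi coefficients of $s_{a1^d}$ — except that the paper works at length $d+1$ (where $s_{a1^d}$ contributes the single term $h_{a1^d}$ while both compositions have exactly $\binom{R-1}{d}$ coarsenings of that length), whereas you work at length $1$ (where the coefficient of $h_N$ in $s_{a1^d}$ is $(-1)^d\neq 0$ while each composition has only its one total coarsening); your choice makes the invariance statement trivial at the cost of a slightly longer, but correct, determinant computation.
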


\begin{proof}
Suppose that $r_\alpha-r_\beta=s_{a1^d}$. By Theorem \ref{thm:jt}, the $h$-basis expansion of $s_{a1^d}$ has exactly one term, namely $h_{a1^d}$, whose partition has exactly $(d+1)$ parts. Therefore, if $\mu$ is a partition with $\ell(\mu)=d+1$, then by \eqref{eq:nejt} we must have \begin{equation}\label{eq:partsmults}m_{\mathcal{M}(\alpha)}(\mu)=\begin{cases}m_{\mathcal{M}(\beta)}(\mu)+(-1)^{R-d-1} & \mu=a1^d\\m_{\mathcal{M}(\beta)}(\mu) & \mu\neq a1^d.\end{cases}\end{equation}
However, $\alpha$ and $\beta$ have the same total number of coarsenings of length $(d+1)$, specifically $\binom{R-1}{R-(d+1)}=\binom{R-1}d$, because from the $(R-1)$ pairs of adjacent parts we must choose to add $(R-(d+1))$ times to produce a coarsening of length $(d+1)$. This contradicts \eqref{eq:partsmults}.
\end{proof}

Now we show that if $r_\alpha-r_\beta=s_{ab1^d}$, then $\alpha$ and $\beta$ must have different ends.

\begin{proposition} \label{prop:ab1d} If $r_\alpha-r_\beta=s_{ab1^d}$, then $e(\alpha)\neq e(\beta)$. \end{proposition}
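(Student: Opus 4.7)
The plan is to exploit equation~\eqref{eq:nejt}, which converts the identity $r_\alpha-r_\beta=s_{ab1^d}$ into a matched pair of constraints on the multisets of coarsenings $\mathcal{M}(\alpha)$ and $\mathcal{M}(\beta)$. To apply this I first need good information on the $h$-basis expansion of $s_{ab1^d}$. Cofactor expansion of the Jacobi--Trudi determinant (Theorem~\ref{thm:jt}) along the first column yields the recursion $s_{ab1^d}=h_a s_{b1^d}-h_{b-1} s_{(a+1)1^d}$, and together with the analogous hook-shape recursion $s_{c1^d}=h_c e_d-s_{(c+1)1^{d-1}}$ this shows that every $h_\mu$ appearing in $s_{ab1^d}$ has $\ell(\mu)\leq d+2$, with exactly two terms of length $d+2$: $h_{ab1^d}$ with coefficient $+1$ and $h_{(a+1)(b-1)1^d}$ with coefficient $-1$.

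Since the only length-$R$ coarsening of $\alpha$ (respectively $\beta$) is $\alpha$ (respectively $\beta$) itself, \eqref{eq:nejt} applied to partitions $\mu$ of length $R$ gives $\chi(\mu=\lambda(\alpha))-\chi(\mu=\lambda(\beta))=(-1)^{R-\ell(\mu)}c_\mu$. This forces $R\geq d+2$; moreover, if $R>d+2$ then $\lambda(\alpha)=\lambda(\beta)$, while if $R=d+2$ then $\lambda(\alpha)=ab1^d$ and $\lambda(\beta)=(a+1)(b-1)1^d$ are pinned down exactly.

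Now I assume $e(\alpha)=e(\beta)$ for contradiction, so $\delta_\alpha=\delta_\beta$. In the case $R=d+2$, $\alpha$ is a rearrangement of the multiset $(a,b,1^d)$ and $\beta$ of $(a+1,b-1,1^d)$; an exhaustive check of the possible two-element end multisets, using $a\geq b\geq 2$, rules out every pairing except $e(\alpha)=e(\beta)=\{1,1\}$, which additionally requires $d\geq 2$. For this residual configuration, the length-$(d+1)$ $h$-basis coefficients of $s_{ab1^d}$ (read off from the recursion) are compared against the $R-1=d+1$ length-$(R-1)$ coarsenings of $\alpha$ and $\beta$, which are highly constrained by $\alpha_1=\alpha_R=\beta_1=\beta_R=1$, yielding a contradiction. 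In the case $R>d+2$ with $\lambda(\alpha)=\lambda(\beta)$, Proposition~\ref{prop:sedelta} gives $q'(\alpha)\leq_{lex}q'(\beta)$; applying the $\omega$-involution to get $r_{\alpha^t}-r_{\beta^t}=s_{\nu'}$ and again invoking Proposition~\ref{prop:sedelta} where applicable (using \eqref{eq:parttranspose} and the identity $\delta_{\gamma^t}=2-\delta_\gamma$) supplies complementary constraints on $q'(\alpha^t),q'(\beta^t)$. Combining these with the end-matching hypothesis and the length-$(R-1)$ coarsening counts from \eqref{eq:nejt} extracts a contradiction with $r_\alpha-r_\beta=s_\nu\neq 0$.

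The main obstacle will be finishing the $R=d+2$ residual subcase in which both ends are equal to $1$: the end-parameter analysis alone is too coarse, and the contradiction must come from the interaction between specific length-$(d+1)$ $h$-coefficients of $s_{ab1^d}$ and the structurally constrained length-$(R-1)$ coarsenings available under the end-$1$ hypothesis. Making the $R>d+2$ argument fully rigorous is a parallel challenge, since $\lambda(\alpha)$, $e(\alpha)$, and $q'(\alpha)$ data alone need not force $\alpha=\beta$ or $\alpha=\beta^*$, so one must extract additional rigidity either from $(\alpha^t,\beta^t)$ or from deeper length-$(R-1)$ coarsening comparisons.
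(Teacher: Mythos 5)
Your reduction via \eqref{eq:nejt} to the length-$R$ coarsenings is sound as far as it goes: the Jacobi--Trudi expansion of $s_{ab1^d}$ does have exactly two length-$(d+2)$ terms, $h_{ab1^d}$ and $-h_{(a+1)(b-1)1^d}$ (using $b\geq 2$ from Proposition \ref{prop:nohooks}), so $R\geq d+2$, and the dichotomy between $R=d+2$ (where $\lambda(\alpha)$ and $\lambda(\beta)$ are pinned down) and $R>d+2$ (where $\lambda(\alpha)=\lambda(\beta)$) is correct. The end-multiset check in the $R=d+2$ case correctly isolates $e(\alpha)=e(\beta)=\{1,1\}$ as the only survivor. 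But both of the remaining subcases are left as announcements rather than arguments, and the second one is a genuine gap, not a routine verification. When $R>d+3$, equation \eqref{eq:nejt} applied to length-$(R-1)$ partitions only tells you $ap(\alpha)=ap(\beta)$, and the statistics you propose to combine --- $\lambda$, $e$, $\delta$, $q'$, and their transposed versions --- are all simultaneously equal for plenty of pairs with $r_\alpha\neq r_\beta$ (e.g.\ $\alpha=23432$, $\beta=24332$), so nothing in your toolkit rules out $r_\alpha-r_\beta=s_{ab1^d}$ for such a pair; the claimed ``contradiction with $s_\nu\neq 0$'' has no mechanism behind it. The $R=d+2$, both-ends-$1$ subcase is likewise only sketched, though that one is at least plausibly finishable by brute force on length-$(d+1)$ coarsenings.

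The paper avoids this entire case split by looking at a different slice of the $h$-expansion: the unique term $(-1)^{d+1}h_{(a+d+1)(b-1)}$ whose partition contains the part $a+d+1>N/2$. A coarsening of $\alpha$ containing such a part comes from a consecutive block $\alpha_{i+1}+\cdots+\alpha_{R-j}=a+d+1$ together with an arbitrary coarsening of the $i$ parts on the left and $j$ parts on the right, contributing $2^{\max\{i-1,0\}+\max\{j-1,0\}}$ to the count; every contribution with $i\geq 2$ or $j\geq 2$ is even, and the contributions with $i,j\leq 1$ depend only on whether $\alpha_1=b-1$, $\alpha_R=b-1$, or $\alpha_1+\alpha_R=b-1$, hence only on $e(\alpha)$. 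So if $e(\alpha)=e(\beta)$ the total counts agree mod $2$, contradicting the required difference of $\pm 1$ from \eqref{eq:nejt}. This parity argument is uniform in $R$ and uses the end hypothesis exactly once; if you want to salvage your approach you would need a comparably deep invariant for the $\lambda(\alpha)=\lambda(\beta)$ case, since the shallow statistics you list cannot separate the compositions in question.
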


\begin{proof}
Suppose that $r_\alpha-r_\beta=s_{ab1^d}$ and $e(\alpha)=e(\beta)$, and note that by Proposition \ref{prop:nohooks}, we must have $b\geq 2$. By Theorem \ref{thm:jt}, the $h$-basis expansion of $s_{ab1^d}$ contains exactly one term, namely $(-1)^{d+1}h_{(a+d+1)(b-1)}$, that has an $(a+d+1)$ part. Therefore if $\mu$ is a partition of $N$ with an $(a+d+1)$ part, meaning in particular that $\mu_1=a+d+1$ because $$a+d+1>a+d=\frac{2a+2d}2\geq\frac{a+b+d}2=\frac N2,$$ then by \eqref{eq:nejt} we must have \begin{equation}\label{eq:ab1dmults} m_{\mathcal{M}(\alpha)}(\mu)=\begin{cases} m_{\mathcal{M}(\beta)}(\mu)+(-1)^{R+d+1} & \mu=(a+d+1)(b-1)\\ m_{\mathcal{M}(\beta)}(\mu) & \mu\neq (a+d+1)(b-1).\end{cases}\end{equation}

A coarsening $\gamma\geq_{coar}\alpha$ with an $(a+d+1)$ part arises from summing a string of consecutive parts $\alpha_{i+1}\cdots\alpha_{R-j}$ in $\alpha$ with sum $(a+d+1)$, and then from further summing some of the $i$ parts to the left and some of the $j$ parts to the right. There are $2^{\max\{i-1,0\}}$ ways to sum some of the $i$ parts to the left and $2^{\max\{j-1,0\}}$ ways to sum some of the $j$ parts to the right because for each pair of adjacent parts, we can choose to add or not. Also note that because $a+d+1>\frac N2$, $i$ and $j$ can be determined from $\gamma$ so there will be no double-counting. Therefore, setting $$S_\alpha=\{(i,j): \ \alpha_{i+1}+\cdots+\alpha_{R-j}=a+d+1\},$$ we have the total number of coarsenings of $\alpha$ with an $(a+d+1)$ part is $$\sum_{\mu: \ \mu_1=a+d+1} m_{\mathcal{M}(\alpha)}(\mu)=\sum_{(i,j)\in S_\alpha}2^{\max\{i-1,0\}+\max\{j-1,0\}}.$$ Separating those $(i,j)$ with $i,j\leq 1$ if they appear, we have \begin{align}\nonumber \sum_{\mu: \ \mu_1=a+d+1} m_{\mathcal{M}(\alpha)}(\mu)&=\sum_{\substack{(i,j)\in S_\alpha:\\ i\geq 2\text{ or }j\geq 2}}2^{\max\{i-1,0\}+\max\{j-1,0\}}\\\nonumber&+\chi((0,0)\in S_\alpha)+\chi((1,0)\in S_\alpha)+\chi((0,1)\in S_\alpha)+\chi((1,1)\in S_\alpha)\end{align} and similarly for $\beta$. Now we cannot have $(0,0)\in S_\alpha$ because $N=a+b+d>a+d+1$ since $b\geq 2$. Meanwhile, $(1,0)\in S_\alpha$ if and only if $\alpha_1=b-1$, $(0,1)\in S_\alpha$ if and only if $\alpha_R=b-1$, and $(1,1)\in S_\alpha$ if and only if $\alpha_1+\alpha_R=b-1$. So because $e(\alpha)=e(\beta)$, the sum of these terms is identical for $\alpha$ and $\beta$, and so \begin{align*}\sum_{\mu: \ \mu_1=a+d+1} m_{\mathcal{M}(\alpha)}(\mu) \ -&\sum_{\mu: \ \mu_1=a+d+1} m_{\mathcal{M}(\beta)}(\mu)\\=&\sum_{\substack{(i,j)\in S_\alpha:\\i\geq 2\text{ or }j\geq 2}}2^{\max\{i-1,0\}+\max\{j-1,0\}}-\sum_{\substack{(i,j)\in S_\beta:\\i\geq 2\text{ or }j\geq 2}}2^{\max\{i-1,0\}+\max\{j-1,0\}},\end{align*} which is even because all of the terms are even. This contradicts \eqref{eq:ab1dmults}.
\end{proof}

\subsection{Near-equality with different ends} Given Proposition \ref{prop:ab1d}, we can now focus our attention on classifying near-equality when $\alpha$ and $\beta$ have different ends. We first address the case where $\alpha$ and $\beta$ have different parts. In fact, we prove something a little stronger.

\begin{theorem} \label{thm:nelambda} Suppose that $r_\alpha-r_\beta=s_\nu$ and $\lambda(\alpha)\neq\lambda(\beta)$. Then $\nu=ab1^d$ for some $a,b,d$ and $\alpha$ and $\beta$ are (up to reversal) as in Case \eqref{case:1} or \eqref{case:2} of Theorem \ref{thm:main}, namely
\begin{itemize}
\item[Case \eqref{case:1}: ]$\alpha=b1^da$, $\beta=(b-1)1^d(a+1)$
\item[Case \eqref{case:2}: ] $\alpha=ab1^d$, $\beta=(b-1)(a+1)1^d$. \end{itemize} \end{theorem}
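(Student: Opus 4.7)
The plan is to proceed in four steps.

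\textbf{Step 1 (identify $\nu$).} By \eqref{eq:alslex}, the Schur expansion of $r_\alpha$ has $s_{\lambda(\alpha)}$ as its lex-smallest term with coefficient $1$, and similarly $s_{\lambda(\beta)}$ for $r_\beta$. If $\lambda(\beta)<_{lex}\lambda(\alpha)$, then $-s_{\lambda(\beta)}$ appears in $r_\alpha-r_\beta$ with nothing to cancel it, contradicting $r_\alpha-r_\beta=s_\nu$. Hence $\lambda(\alpha)<_{lex}\lambda(\beta)$ (using $\lambda(\alpha)\neq\lambda(\beta)$), and comparing lex-smallest terms yields $\nu=\lambda(\alpha)$.

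\textbf{Step 2 (length-$R$ in the $h$-basis).} Expanding both sides of $r_\alpha-r_\beta=s_\nu$ in the $h$-basis via Theorem \ref{thm:jtrib}, the only length-$R$ coarsening of $\alpha$ (resp.\ $\beta$) is $\alpha$ itself (resp.\ $\beta$), producing the partition $\lambda(\alpha)=\nu$ (resp.\ $\lambda(\beta)$). Applying \eqref{eq:nejt} to each length-$R$ partition then forces the length-$R$ portion of the $h$-basis expansion of $s_\nu$ to equal exactly $h_\nu - h_{\lambda(\beta)}$.

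\textbf{Step 3 (constrain $\nu$ via Jacobi--Trudi).} Let $C$ be the number of parts of $\nu$ at least $2$. By Theorem \ref{thm:jt}, $s_\nu=\det(h_{\nu_i-i+j})$, and a descending argument on $i$ shows that every length-$R$ term in the Leibniz expansion comes from some $\sigma\in S_R$ with $\sigma(i)=i$ for $i>C$, so $\sigma$ restricts to a permutation of $\{1,\ldots,C\}$ satisfying $\nu_i-i+\sigma(i)\geq 1$. The case $C=0$ forces $\alpha=1^R$ (excluded by hypothesis), and $C=1$ makes $\nu$ a hook (excluded by Proposition \ref{prop:nohooks}). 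For $C=2$, the identity and the transposition $(12)$ contribute exactly $h_\nu - h_{(a+1)(b-1)1^d}$, matching Step 2 and identifying $\lambda(\beta)=(a+1)(b-1)1^d$. For $C\geq 3$, the permutations $(23)$, $(132)$, and (when $\nu_3\geq 3$) also $(13)$, $(123)$ contribute further signed terms; a direct case analysis on whether any $\nu_i$ coincide shows that the signed sum always has at least three distinct sorted partitions with nonzero coefficients (the most delicate subcase being $\nu_1=\nu_2=\nu_3=2$, where two contributions coincide at $h_{32\,1^{d+1}}$ with coefficient $-2$), contradicting Step 2. This last case analysis is the main obstacle, requiring careful bookkeeping of which partitions secretly coincide after sorting.

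\textbf{Step 4 (determine $\alpha$ and $\beta$).} We now have $\nu=ab1^d$ with $b\geq 2$, so $\alpha$ has multiset of parts $\{a,b,1^d\}$ and $\beta$ has $\{a+1,b-1,1^d\}$. To pin down the arrangements, I apply \eqref{eq:nejt} at length $R-1$: the length-$(R-1)$ coefficients of $s_{ab1^d}$ in the $h$-basis come from Jacobi--Trudi permutations with exactly one $h_0$ factor (occurring at some row $i_0\geq 3$) and evaluate to explicit values. These translate into adjacency counts in $\alpha$ and $\beta$---the number of $1$'s adjacent to $a$, the number adjacent to $b$, the number of adjacent $1$-$1$ pairs, and whether $a$ and $b$ themselves are adjacent. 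Matching the resulting linear equations on both sides and working up to the reversal symmetry $r_\alpha=r_{\alpha^*}$ leaves exactly the two families $\alpha=b1^da,\ \beta=(b-1)1^d(a+1)$ and $\alpha=ab1^d,\ \beta=(b-1)(a+1)1^d$ of Cases \eqref{case:1} and \eqref{case:2}.
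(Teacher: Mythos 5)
Your Steps 1--3 follow essentially the same route as the paper's proof: it likewise uses \eqref{eq:alslex} to identify $\nu=\lambda(\alpha)$, compares the length-$R$ terms of the $h$-expansions via Theorem \ref{thm:jtrib} and \eqref{eq:nejt} to get $\lambda(\beta)=(a+1)(b-1)1^d$, and rules out $\nu_3\geq 2$ by exhibiting the extra signed length-$R$ terms in the Jacobi--Trudi expansion (you are in fact somewhat more explicit than the paper about the degenerate subcase $\nu_1=\nu_2=\nu_3$, where two of those terms collapse into a coefficient $-2$; either way the length-$R$ part of $s_\nu$ cannot equal $h_{\lambda(\alpha)}-h_{\lambda(\beta)}$). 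The first half of your Step 4 also matches the paper, which uses the coefficients $-(d-1)$ of $h_{ab21^{d-2}}$ and $-1$ of $h_{a(b+1)1^{d-1}}$ to force the $1$'s of $\alpha$ to be contiguous and $b$ to be adjacent to a $1$, leaving $\alpha=b1^da$ or $\alpha=ab1^d$ up to reversal.

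The gap is in your determination of $\beta$. Length-$(R-1)$ information about $\mathcal{M}(\beta)$ is exactly the multiset of adjacent pairs of $\beta$, and this multiset does not always determine $\beta$ up to reversal. Concretely, take $b=2$ and $d\geq 3$ in Case \eqref{case:2}, so that $\beta$ has multiset of parts $\{a+1,1^{d+1}\}$: every arrangement $1^i(a+1)1^j$ with $i,j\geq 1$ and $i+j=d+1$ has the identical adjacent-pair multiset consisting of $\{1,a+1\}$ twice and $\{1,1\}$ with multiplicity $d-1$, so your system of linear equations cannot separate the correct $\beta=1(a+1)1^d$ from, say, $1^21(a+1)1^{d-1}$; no constraint coming from coarsenings of length $R-1$ distinguishes these compositions. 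The paper avoids this entirely: having pinned down $\alpha$, it invokes the already-proved forward direction (Theorem \ref{thm:forwarddirection}) to conclude that $r_\beta$ equals $r_{(b-1)1^d(a+1)}$ or $r_{(b-1)(a+1)1^d}$ exactly, and then applies Proposition \ref{prop:sedelta} --- that is, the transpose invariants $\delta_\beta$ and $q'(\beta)$, which do detect how the block of $1$'s is split --- together with one explicit coarsening computation, to identify $\beta$ up to reversal. To close your Step 4 you need a tool of this kind (for instance, apply the $\omega$ involution to the difference, or descend to coarsenings of length $R-2$).
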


\begin{proof}
From \eqref{eq:alslex}, we have $\nu=\lambda(\alpha)<_{dom}\lambda(\beta)$, and by Proposition \ref{prop:nohooks}, we have $\nu_2\geq 2$. Now if $\nu_3\geq 2$, then the $h$-basis expansion of $s_\nu$ has at least four terms with $R$ parts, namely $$h_\nu, \ -h_{\lambda(\nu_1(\nu_2+1)(\nu_3-1)\cdots\nu_R)}, \ -h_{\lambda((\nu_1+1)(\nu_2-1)\nu_3\cdots\nu_R)},\hbox{ and }h_{\lambda((\nu_1+2)(\nu_2-1)(\nu_3-1)\cdots\nu_R)},$$ which is impossible because $r_\alpha-r_\beta$ has two such terms, namely $h_{\lambda(\alpha)}$ and $-h_{\lambda(\beta)}$. So $\nu=\lambda(\alpha)=ab1^d$ for some $a,b,d$, and because the $h$-basis expansion of $s_{ab1^d}$ contains the term $-h_{(a+1)(b-1)1^d}$ with $R$ parts, we have $\lambda(\beta)=(a+1)(b-1)1^d$. If $d=0$ then we are done, so suppose that $d\geq 1$.\\

The partition $\mu=ab21^{d-2}<_{lex}(a+1)(b-1)1^d=\lambda(\beta)$ so does not appear in $\mathcal{M}(\beta)$, so because by Theorem \ref{thm:jt} the $h$-basis expansion of $s_\nu$ contains the term $-(d-1)h_\mu$, we must have by \eqref{eq:nejt} that $m_{\mathcal{M}(\alpha)}(\mu)=d-1$. This partition arises from a coarsening of $\alpha$ precisely when an adjacent pair among the $d$ $1$'s of $\alpha$ is summed. In order for there to be $(d-1)$ such adjacent pairs, all of the $1$'s must be together and so $\alpha$ is up to reversal one of $b1^da$, $ab1^d$, or $ba1^d$. If $a=b$, then $\alpha=b1^da$ or $\alpha=ab1^d$ up to reversal. It $a\neq b$, then the partition $\tau=a(b+1)1^{d-1}<_{lex}\lambda(\beta)$ so does not appear in $\mathcal{M}(\beta)$, so because by Theorem \ref{thm:jt} the $h$-basis expansion of $s_\nu$ contains the term $-h_\tau$, we must have by \eqref{eq:nejt} that $m_{\mathcal{M}(\alpha)}(\tau)=1$, so the $b$ is next to a $1$, meaning again that $\alpha=b1^da$ or $\alpha=ab1^d$ up to reversal.\\

Finally, if $\alpha=b1^da$, then $r_\beta=r_{b1^da}-s_{ab1^d}=r_{(b-1)1^d(a+1)}$ by Theorem \ref{thm:forwarddirection}. Therefore, by Proposition \ref{prop:sedelta} the number of end $1$'s must be the same, that is, $\delta_\beta=\chi(b=2)$, which means that up to reversal $\beta=(b-1)1^d(a+1)$. If $\alpha=ab1^d$ then $r_\beta=r_{ab1^d}-s_{ab1^d}=r_{(b-1)(a+1)1^d}$ by Theorem \ref{thm:forwarddirection}. Therefore, by Proposition \ref{prop:sedelta} the number of end $1$'s must be the same and also $q'(\alpha)=q'(\beta)$, which means that up to reversal either $\beta=(b-1)(a+1)1^d$ or $\beta=(a+1)(b-1)1^d$ with $b\geq 3$. In order to exclude this last possibility, observe that $(a+1)b1^{d-1}\in\mathcal{M}((a+1)(b-1)1^d)\setminus\mathcal{M}((b-1)(a+1)1^d)$, and therefore $r_{(a+1)(b-1)1^d}\neq r_{(b-1)(a+1)1^{d-1}}$, so we can only have up to reversal $\beta=(b-1)(a+1)1^d$.
\end{proof}

By applying the $\omega$ involution, we immediately deduce the following corollary.

\begin{corollary} \label{cor:nelambda} Suppose that $r_\alpha-r_\beta=s_\nu$ and $\lambda(\alpha^t)\neq\lambda(\beta^t)$. Then $\nu=a2^c1^d$ and $\alpha$ and $\beta$ are (up to reversal) as in Case \eqref{case:6} or \eqref{case:7} of Corollary \ref{cor:main}, namely
\begin{itemize}
\item[Case \eqref{case:6}: ]$\alpha=1^{c+d}a1^c$, $\beta=1^{c+d+1}a1^{c-1}$
\item[Case \eqref{case:7}: ] $\alpha=(a-1)1^{c-1}21^{c+d}$, $\beta=(a-1)1^{c+d}21^{c-1}$. \end{itemize} \end{corollary}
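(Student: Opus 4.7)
The plan is to deduce the corollary directly from Theorem \ref{thm:nelambda} by applying the $\omega$ involution, exactly as the text hints, and then translate the resulting classification of $(\alpha^t,\beta^t)$ back to $(\alpha,\beta)$ via the transpose formula \eqref{eq:parttranspose} (and, more concretely, the composition‑level formula displayed just before it).

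First, applying $\omega$ to $r_\alpha-r_\beta=s_\nu$ gives $r_{\alpha^t}-r_{\beta^t}=s_{\nu'}$. The hypothesis $\lambda(\alpha^t)\neq\lambda(\beta^t)$ is exactly the hypothesis of Theorem \ref{thm:nelambda} applied to the triple $(\alpha^t,\beta^t,\nu')$, so we conclude that $\nu'=a'b'1^{d'}$ with $a'\geq b'\geq 2$ and $d'\geq 0$, and that $(\alpha^t,\beta^t)$ coincides, up to reversal, with one of the two configurations in Cases \eqref{case:1} or \eqref{case:2} of Theorem \ref{thm:main}. Conjugating $a'b'1^{d'}$ yields $\nu=(d'+2)\,2^{b'-1}\,1^{a'-b'}$, so setting
\[a=d'+2,\qquad c=b'-1,\qquad d=a'-b'\]
puts $\nu$ in the stated form $a2^c1^d$ with $c\ge 1$, which also forces $c\geq 1$ in agreement with the corollary. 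I would then verify that reversal commutes with transposition ($(\gamma^*)^t=(\gamma^t)^*$), so that ``up to reversal'' is preserved when we undo the transpose.

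Next, I would plug the four ribbons of Cases \eqref{case:1}--\eqref{case:2} into the composition transpose formula. For instance, applying it to $\alpha^t=b'1^{d'}a'$ (whose profile is $p=(0,d',0)$, so $p'=(-1,d',-1)$, $z=(b',a')$) produces
\[\alpha=1^{a'-1}(d'+2)1^{b'-1}=1^{c+d}\,a\,1^{c},\]
and the analogous computation on $\beta^t=(b'-1)1^{d'}(a'+1)$ produces $\beta=1^{c+d+1}a1^{c-1}$, which is Case \eqref{case:6}. Similarly, transposing Case \eqref{case:2}, the ribbon $\alpha^t=a'b'1^{d'}$ (with $z=(a',b')$ and $p=(0,0,d')$, hence $p'=(-1,0,d'-1)$) becomes $\alpha=(d'+1)1^{b'-2}2\,1^{a'-1}=(a-1)1^{c-1}2\,1^{c+d}$, and $\beta^t=(b'-1)(a'+1)1^{d'}$ becomes $\beta=(a-1)1^{c+d}2\,1^{c-1}$, which is Case \eqref{case:7}.

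The only real delicacies are the degenerate boundary values, where the composition transpose formula must be interpreted carefully: when $b'=2$ the block $1^{b'-2}$ is empty, when $d'=0$ the run of internal $1$'s disappears, and when some $p'_i=-1$ the corresponding $(p'_i+2)$ contributes a $1$ that merges with a neighbouring run. Each of these special cases has to be checked separately to confirm that the resulting $\alpha$ and $\beta$ still match Cases \eqref{case:6}--\eqref{case:7} (possibly after a single reversal). This bookkeeping is the only nontrivial part of the argument; everything else is a formal application of $\omega$, Theorem \ref{thm:nelambda}, and the conjugation identity \eqref{eq:parttranspose}.
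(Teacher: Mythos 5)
Your proposal is correct and is exactly the paper's argument: the paper derives Corollary \ref{cor:nelambda} from Theorem \ref{thm:nelambda} purely by applying the $\omega$ involution, which is what you do, and your explicit transpose computations (including the identification $a=d'+2$, $c=b'-1$, $d=a'-b'$ and the translation of Cases \eqref{case:1}--\eqref{case:2} into Cases \eqref{case:6}--\eqref{case:7}) check out. The only difference is that you spell out the bookkeeping the paper leaves implicit.
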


Now the last step in proving Theorem \ref{thm:main} will be to prove the following theorem, which classifies all cases of near-equality for which $\lambda(\alpha)=\lambda(\beta)$, $\lambda(\alpha^t)=\lambda(\beta^t)$, and $e(\alpha)\neq e(\beta)$.

\begin{theorem} \label{thm:ends}
Suppose that $r_\alpha-r_\beta=s_\nu$, $\lambda(\alpha)=\lambda(\beta)$, $\lambda(\alpha^t)=\lambda(\beta^t)$, and $e(\alpha)\neq e(\beta)$. Then $\nu=ab1^d$ for some $a,b,d$ with $b\geq 3$ and $\alpha$ and $\beta$ are (up to reversal) as in Case \eqref{case:3}, \eqref{case:4}, or \eqref{case:5} of Theorem \ref{thm:main}, namely \begin{itemize}
\item [Case \eqref{case:3}: ]$\alpha=1^{d+1}a(b-1)$, $\beta=1^{d+1}(b-1)a$
\item [Case \eqref{case:4}: ]$\alpha=1a1^d(b-1)$, $\beta=1(b-1)1^da$
\item [Case \eqref{case:5}: ]$\alpha=(b-1)1^db(a-b+1)$, $\beta=b1^d(b-1)(a-b+1)$.\end{itemize} \end{theorem}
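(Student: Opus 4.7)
The plan is to extract sharp structural constraints from the three hypotheses, force the shape of $\nu$ via $h$-coefficient matching, and classify the admissible pairs by a case split on $\delta := \delta_\alpha$.

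First, since $r_\alpha - r_\beta = s_\nu$ is Schur-positive and $\lambda(\alpha) = \lambda(\beta)$, Proposition \ref{prop:sedelta} gives $\delta_\alpha \geq \delta_\beta$. On the other hand, applying formula \eqref{eq:parttranspose} to the assumption $\lambda(\alpha^t) = \lambda(\beta^t)$ together with the fact that $N$, $R$, $k$ are shared, the numbers of $1$'s in the two transpose partitions must coincide, forcing $\delta_\alpha = \delta_\beta$; then matching the multiplicities of parts of size $j+2$ in each transpose partition gives $q'(\alpha) = q'(\beta)$ as sequences. Consequently $\alpha$ and $\beta$ share the same multiset $\{z_i\}$ of non-$1$ parts and the same multiset $\{p'_i\}$ of reduced-profile values. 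The common value $\delta$ cannot be $2$ (else $e(\alpha) = e(\beta) = \{1,1\}$), so $\delta \in \{0,1\}$, and up to reversal the end at which $\alpha$ and $\beta$ differ is the last part.

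Next, I would pin down the shape of $\nu$. Because all parts are positive, $\ell(\lambda(\alpha)) = R$ and the only coarsening of either composition yielding a length-$R$ partition is the trivial one. By \eqref{eq:nejt}, the Jacobi--Trudi expansion of $s_\nu$ therefore contains no $h_\mu$-term with $\ell(\mu) = R$; combined with the fact that $h_\nu$ itself appears with coefficient $1$ in that expansion, this forces $\ell(\nu) < R$. Applying the $\omega$ involution to the near-equality and repeating the argument for $r_{\alpha^t} - r_{\beta^t} = s_{\nu'}$ gives the dual bound $\ell(\nu') < \ell(\alpha^t)$. To rule out $\nu_3 \geq 2$, I would compare further $h$-coefficients: such a $\nu$ forces $s_\nu$ to contain multiple $h_\mu$-terms arising from minors of the Jacobi--Trudi determinant involving $\nu_1, \nu_2, \nu_3$, which cannot all be matched by coarsening multiplicity differences of $\alpha$ and $\beta$ given the rigid shared structure from the first step. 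Proposition \ref{prop:nohooks} eliminates $b \leq 1$, and the case $b = 2$ is ruled out by a direct coefficient count analogous to the parity argument in the proof of Proposition \ref{prop:ab1d}, now tracking partitions of the form $(a+d+1)(b-1)$ or $(a+d+2)2$. Hence $\nu = ab1^d$ with $b \geq 3$.

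With $\nu = ab1^d$ and shared multisets $\{z_i\}, \{p'_i\}$, the composition $\beta$ must be obtained from $\alpha$ by a rearrangement of the non-$1$ parts (with a compatible reshuffling of profile values) that alters an end. For $\delta = 0$, both ends are non-$1$ and the only admissible rearrangement is a swap of the first two non-$1$ parts, forced to have values $(b-1)$ and $b$ with a trailing third non-$1$ part of size $(a-b+1)$ and a block of $d$ $1$'s between the swapped pair; this is Case \ref{case:5}. For $\delta = 1$, up to reversal $\alpha_1 = \beta_1 = 1$, and the swap is between the last two non-$1$ parts of values $a$ and $b-1$; depending on whether the remaining $1$'s sit entirely at the front (Case \ref{case:3}) or are distributed as one initial $1$ with the other $d$ between the swapped parts (Case \ref{case:4}), we obtain the two subcases. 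The main obstacle is systematically excluding every other swap or rearrangement pattern, which I would accomplish by exhibiting, for each candidate $(\alpha,\beta)$ outside the three identified families, a specific coarsening $\mu$ whose signed multiplicity difference under \eqref{eq:nejt} contradicts the Jacobi--Trudi coefficient of $h_\mu$ in $s_{ab1^d}$. Theorem \ref{thm:forwarddirection} then confirms that Cases \ref{case:3}, \ref{case:4}, and \ref{case:5} do satisfy the claimed near-equalities, completing the classification.
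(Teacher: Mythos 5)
Your opening step matches the paper: from $\lambda(\alpha^t)=\lambda(\beta^t)$ and \eqref{eq:parttranspose} you correctly extract $\delta_\alpha=\delta_\beta$ and $q'(\alpha)=q'(\beta)$. After that, however, the two load-bearing steps of your argument are asserted rather than proved, and the first of them cannot be repaired by the method you propose. To rule out $\nu_3\geq 2$ you appeal to ``multiple $h_\mu$-terms \ldots which cannot all be matched by coarsening multiplicity differences \ldots given the rigid shared structure from the first step.'' But that rigid shared structure does not suffice: the conjectured families in Section \ref{sec:further} (e.g.\ $\alpha=1^{c+d+1}a(b-1)1^c$, $\beta=1^{c+d+1}(b-1)a1^c$) satisfy $\lambda(\alpha)=\lambda(\beta)$ and $\lambda(\alpha^t)=\lambda(\beta^t)$ yet have $\nu=ab2^c1^d$ with $\nu_3=2$. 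So any proof that $\nu_3\leq 1$ must make essential use of the hypothesis $e(\alpha)\neq e(\beta)$, which your $h$-coefficient matching step never invokes. The paper does this with machinery you do not have: Theorem \ref{thm:generalshortends} forces (up to reversal) $2\leq\alpha_1<\beta_1$ or $\alpha_1=\beta_1$, $2\leq\alpha_R<\beta_R$, and then Lemmas \ref{lem:countends} and \ref{lem:compareends} show that the LR coefficients of $\alpha$ and $\beta$ differ at \emph{every} partition $\mu(M,u)$ with $M$ equal to the differing end minus one and $1\leq u\leq S'$. Since \eqref{eq:nelr} allows only one such discrepancy, this simultaneously forces $S'\leq 1$ (all $1$'s contiguous except lone end $1$'s) and identifies $\nu=\mu(M,S')=ab1^d$ with the crucial extra datum $b-1=M+1\in e(\alpha)\setminus e(\beta)$.

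Your final classification step inherits this gap. You acknowledge that ``systematically excluding every other swap or rearrangement pattern'' is the main obstacle, and propose to exhibit a contradicting coarsening for each excluded candidate; but without $S'\leq 1$, without $b-1\in e(\alpha)\setminus e(\beta)$, and without the constraint on $\lambda(\alpha)$ that the paper obtains from Proposition \ref{prop:eap} and Lemma \ref{lem:ap} (namely $\lambda(\alpha)=a(b-1)1^{d+1}$ or $\lambda(\alpha)=\lambda((a-b+1)b(b-1)1^d)$, which in particular pins $R=d+3$), the set of candidates is not reduced to a finite, checkable list, so the proposed case-by-case exclusion is not a procedure that terminates as described. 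In short, the proposal reproduces the easy preliminary reductions but omits the LR-tableau counting argument that is the actual engine of the paper's proof, and the substitute $h$-basis arguments are not carried out and in one place cannot work as stated.
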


Our first task towards proving Theorem \ref{thm:ends} is to count certain LR coefficients in Lemma \ref{lem:countends}, which we will show in Lemma \ref{lem:compareends} are sensitive to the ends of a composition. The following definition will be cryptic but we will work through an example.

\begin{definition} \label{def:modifiedends} Recall that $z(\alpha)=z_1\cdots z_{R-k}$ are the parts of $\alpha$ not equal to $1$. We define the sequence of nonnegative integers \begin{align*}\epsilon(\alpha)&=\epsilon_0(\alpha)\cdots\epsilon_{R-k-1}(\alpha)\\&=(z_1-2+\chi(p_1=0))(z_2-2)\cdots(z_{R-k-1}-2)(z_{R-k}-2+\chi(p_{R-k+1}=0)).\end{align*}
Also let $S'=\sum_{j\geq 1}q'_j$. Then for $0\leq M\leq\frac12(N-2R+k+2-\delta_\alpha)$ and $1\leq u\leq S'$, unless $S'=0$, in which case $u=0$, we define the partition of size $N$ $$\mu(M,u)=(N-R+1-M)(R-k-1+\delta_\alpha+M)u1^{k-\delta_\alpha-u}.$$
\end{definition}

\begin{example} \label{ex:ends}
Let $\alpha=31311515$. Then $N=20$, $R=8$, $k=4$, $\delta_\alpha=0$, $e(\alpha)=\{3,5\}$, $$\epsilon(\alpha)=2134, \ p'(\alpha)=(-1)121(-1), \ q'(\alpha)=021, \ \hbox{ and }S'=2+1=3.$$ Now for $0\leq M\leq \frac12(20-16+4+2-0)=5$ and $1\leq u\leq S'=3$ we define the partition $$\mu(M,u)=(13-M)(M+3)u1^{4-u}.$$
\end{example}

We now count the number of LR tableaux of content $\mu(M,u)$.

\begin{lemma} \label{lem:deep} In the ribbon diagram of $\alpha$, there are exactly $(k-\delta_\alpha)$ cells $x$ with the property that there are at least two cells above $x$ in the same column as $x$. These $(k-\delta_\alpha)$ cells occur in exactly $S'$ columns. \end{lemma}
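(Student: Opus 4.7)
The plan is to translate both quantities into column-by-column sums over the ribbon diagram of $\alpha$. A cell has at least two cells above it in its column if and only if it lies in the third or later row of a column of height at least $3$, so the total count of such cells equals $\sum_{\text{columns}}\max(h-2,0)$ and the number of contributing columns equals $|\{\text{columns with }h\geq 3\}|$, where $h$ ranges over column heights.

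Next, I would invoke the transpose formula
$$\alpha^t=(p'_{R-k+1}+2)1^{z_{R-k}-2}(p'_{R-k}+2)\cdots(p'_2+2)1^{z_1-2}(p'_1+2)$$
derived in Section \ref{subsec:diagrams}, which describes the multiset of column heights of $\alpha$'s ribbon diagram as the multiset of parts of $\alpha^t$. The $z_i-2$ height-$1$ columns coming from the interior of each non-trivial row contribute nothing to either count, so the only relevant columns are the $R-k+1$ \emph{pivot} columns of heights $p'_i+2$ for $1\leq i\leq R-k+1$.

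For the cell count, each pivot column contributes $\max(p'_i,0)$. For an interior index $2\leq i\leq R-k$, we have $p'_i=p_i\geq 0$, giving contribution $p_i$. For the two end indices $i\in\{1,R-k+1\}$ we have $p'_i=p_i-1$, giving $\max(p'_i,0)=p_1-\chi(\alpha_1=1)$ and $p_{R-k+1}-\chi(\alpha_R=1)$ respectively. Summing and using $\sum_{i=1}^{R-k+1}p_i=k$ together with $\delta_\alpha=\chi(\alpha_1=1)+\chi(\alpha_R=1)$ then yields exactly $k-\delta_\alpha$. For the column count, the pivot columns of height $\geq 3$ are precisely those with $p'_i\geq 1$, so their number is $|\{i:p'_i\geq 1\}|=\sum_{j\geq 1}q'_j=S'$.

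The only delicate point is the $-1$ correction distinguishing $p'_i$ from $p_i$ at the two ends of the profile; this correction is exactly what produces the subtraction of $\delta_\alpha$ in the cell count and forces the cutoff to $S'=\sum_{j\geq 1}q'_j$ (rather than $\sum_{j\geq 0}q_j$) in the column count. Once the pivot-versus-interior bookkeeping is in place, no genuine obstacle remains.
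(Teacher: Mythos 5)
Your proof is correct and follows essentially the same route as the paper's: both identify the relevant columns as those of height $p'_i+2$, count $\max\{p'_i,0\}$ cells per column, and obtain $S'=\sum_{j\geq 1}q'_j$ as the number of columns with $p'_i\geq 1$. The only difference is cosmetic bookkeeping (you split the sum into interior and end indices, while the paper writes $\sum_i p'_i+\chi(p'_1=-1)+\chi(p'_{R-k+1}=-1)$), and both yield $k-\delta_\alpha$.
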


\begin{proof} Such cells arise precisely in columns of length at least $3$, of which there are exactly $S'=\sum_{j\geq 1}q'_j$ by \eqref{eq:parttranspose}. A column of length $(p'_i+2)$ gives rise to exactly $\max\{p'_i,0\}$ such cells. So the number of such cells is $$\sum_{i=1}^{R-k+1}p'_i+\chi(p'_1=-1)+\chi(p'_{R-k+1}=-1)=(k-2)+(2-\delta_\alpha)=k-\delta_\alpha.$$ \end{proof}

\begin{lemma} \label{lem:countends} If $0\leq M\leq\epsilon_0(\alpha)$, then the number of LR tableaux of shape $\alpha$ and content $\mu(M,u)$ is $$c_{\alpha,\mu(M,u)}=\binom{S'-1}{u-1}|E_{\alpha,M}|,$$ where $E_{\alpha,M}\subset\mathbb{Z}^{R-k-1}$ is the set of lattice points $$E_{\alpha,M}=\{x_1\cdots x_{R-k-1}:\ \sum_{i=1}^{R-k-1}x_i=M,\ 0\leq x_i\leq \epsilon_i(\alpha)\hbox{ for }1\leq i\leq R-k-1\}.$$\end{lemma}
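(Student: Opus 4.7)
The plan is to establish $c_{\alpha,\mu(M,u)} = \binom{S'-1}{u-1}\,|E_{\alpha,M}|$ by separately analyzing the placement of entries at least $3$ (giving the binomial factor) and the placement of $2$'s in length-$1$ columns (giving $|E_{\alpha,M}|$). First I carry out a structural analysis: since $\mu(M,u)$ has exactly $u+(k-\delta_\alpha-u)=k-\delta_\alpha$ entries at least $3$, matching the number of deep cells by Lemma \ref{lem:deep}, and since column strictness forces every depth-$d$ cell to have entry at least $d$, these entries occupy precisely the deep cells. It follows that the tops of length-$\geq 2$ columns hold $1$'s, all depth-$2$ cells hold $2$'s, and length-$1$ columns hold $1$'s or $2$'s, with exactly $M$ of them (the surplus of $b = R-k-1+\delta_\alpha+M$ over the $R-k-1+\delta_\alpha$ depth-$2$ cells) holding $2$'s, positioned to the right within each row by weak increase.

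Next I analyze the placement of the $u$ threes. Reading right-to-left and top-to-bottom, the first deep cell encountered is the depth-$3$ cell of the rightmost column of length at least $3$, call it $C_\star$: column tops descend as one moves leftward in a ribbon, so $C_\star$ has the earliest top row among length-$\geq 3$ columns. When this cell is read, $\mathrm{cont}_j = 0$ for $j \geq 3$, so placing any $v \geq 4$ there would force $\mathrm{cont}_v = 1 > 0 = \mathrm{cont}_{v-1}$, violating the lattice condition; hence $C_\star$ always hosts a $3$. The remaining $u-1$ threes may be placed at the depth-$3$ cells of any $u-1$ of the other $S'-1$ columns of length at least $3$, giving $\binom{S'-1}{u-1}$ choices. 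Once the hosting columns are fixed, iterating the same lattice argument at each subsequent deep cell in reading order forces the distinct integers $4, 5, \ldots, 3+k-\delta_\alpha-u$ into the remaining deep cells in a unique way.

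For the $2$'s in length-$1$ cells, let $y_j$ denote the number of $2$'s in length-$1$ cells of row $i_j$, where $i_1, \ldots, i_{R-k}$ index the rows of $\alpha$ with $\alpha_{i_j} \geq 2$; then $\sum_{j=1}^{R-k} y_j = M$. The crucial claim is $y_1 = 0$: when $i_1 = 1$ this is immediate from row $1$ being all $1$'s, and when $i_1 \geq 2$ the rightmost column of $\alpha$ spans rows $1, \ldots, i_1$ with entries forced to $1, 2, \ldots, i_1$ by the lattice argument above; this leaves cumulative counts $\mathrm{cont}_j = 1$ for $j \leq i_1$ after the column is read, so placing a $2$ at the next cell of row $i_1$ would violate $\mathrm{cont}_1 \geq \mathrm{cont}_2$, and row weak-increase propagates $1$'s through the remaining length-$1$ cells of row $i_1$. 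Setting $x_j = y_{j+1}$ for $1 \leq j \leq R-k-1$ then yields a tuple in $E_{\alpha,M}$ with $x_j \leq \epsilon_j$, where $\epsilon_j$ is the number of length-$1$ cells in row $i_{j+1}$.

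The main obstacle is the converse direction: verifying that every tuple in $E_{\alpha,M}$ genuinely arises from a valid LR tableau, meaning the lattice condition must hold throughout the reading and not merely at the configurations already checked. The hypothesis $M \leq \epsilon_0$ is precisely what guarantees this: the $\epsilon_0$ length-$1$ cells of row $i_1$ all hold $1$'s, building up a buffer in $\mathrm{cont}_1 - \mathrm{cont}_2$ that is sufficient to absorb the $M$ subsequent $2$-placements. I would confirm this by inducting through the rows $i_2, \ldots, i_{R-k}$ and showing that at every step, the $2$'s placed in length-$1$ cells of row $i_j$ are offset by $1$'s from leftmost cells (which are tops of length-$\geq 2$ columns and thus forced to $1$) and from the unchosen length-$1$ cells, so that $\mathrm{cont}_1 - \mathrm{cont}_2 \geq 0$ is preserved throughout.
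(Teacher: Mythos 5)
Your proposal is correct and follows essentially the same route as the paper's proof: identify the $k-\delta_\alpha$ deep cells via Lemma \ref{lem:deep} as the forced locations of the entries at least $3$ (with one $3$ forced into the first deep cell in reading order and $\binom{S'-1}{u-1}$ choices for the rest), observe that the tops and depth-$2$ cells of long columns and the first long row are forced, and parametrize the remaining freedom by the number of extra $2$'s in each subsequent long row, with the hypothesis $M\leq\epsilon_0(\alpha)$ guaranteeing the lattice word condition. Your write-up is in places slightly more explicit than the paper (e.g.\ the argument that the first deep cell must hold a $3$ and that $y_1=0$), and your treatment of the converse direction is at the same level of detail as the paper's.
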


We present how the proof works in an example before diving into the details.

\begin{example} \label{ex:countends} Let $\alpha=31311515$. By Example \ref{ex:ends}, we have $k-\delta_\alpha=4$, $S'=3$, $\epsilon(\alpha)=2134$, and we are considering LR tableaux of shape $\alpha$ and content $$\mu(M,u)=(13-M)(M+3)u1^{4-u}$$ for $0\leq M\leq \epsilon_0(\alpha)=2$ and $1\leq u\leq 3$.\\

By Lemma \ref{lem:deep}, there are exactly $k-\delta_\alpha=4$ cells, occupying $S'=3$ columns, which have at least two cells above in the same column and which therefore must be filled by the $u+(4-u)=4$ available entries at least $3$. Of the $u$ $3$'s, one must occupy the top such cell and no two may appear in the same column, so there are $\binom{S'-1}{u-1}=\binom 2{u-1}$ choices of in which columns to place the remaining $(u-1)$ $3$'s, after which the remaining $(4-u)$ cells must be filled with the $(4-u)$ entries at least $4$ in increasing order.\\

We must then place the $1$'s and $2$'s. The top two cells of the $R-k-1+\delta_\alpha=3$ columns of length at least $2$ must be filled with a $1$ above a $2$ and the first row must be filled with $1$'s. The two fillings with $u=2$ are shown below.

$$\tableau{&&&&&&&&&&1 &1 &1 \\ &&&&&&&&&&2 \\ &&&&&&&&1 &\ &3 \\ &&&&&&&&2 \\ &&&&&&&&3 \\ &&&&1 &\ &\ &\ &4 \\&&&&2 \\\ &\ &\ &\ &5}\tableau{&&&&&&&&&&1 &1 &1 \\ &&&&&&&&&&2 \\ &&&&&&&&1 &\ &3 \\ &&&&&&&&2 \\ &&&&&&&&4 \\ &&&&1 &\ &\ &\ &5 \\&&&&2 \\\ &\ &\ &\ &3}$$

We now have $M$ $2$'s left to place. There are presently $\epsilon_0(\alpha)=2$ more $1$'s than $2$'s placed and so because $M\leq 2$ we do not need to worry about the lattice word condition. Because the rows must be weakly increasing, we need only specify how many $2$'s will occupy each of our remaining three rows of length at least $2$. The $i$-th such row from the top has $\epsilon_i(\alpha)$ vacant cells and so can be filled with $x_i$ additional $2$'s, where $0\leq x_i\leq \epsilon_i(\alpha)$, so the number of ways to place the remaining $M$ $2$'s is exactly \begin{equation}\label{eq:endscalc} |\{x_1x_2x_3: \ \sum_{i=1}^3x_i=M, \ 0\leq x_1\leq 1, \ 0\leq x_2\leq 3, \ 0\leq x_3\leq 4\}|=|E_{\alpha,M}|.\end{equation} For an example where $M=2$, we have $|E_{\alpha,2}|=|\{110,101,020,011,002\}|=5$.
\end{example}

Now the proof of Lemma \ref{lem:countends} works exactly as in this example.\\

\emph{Proof of Lemma \ref{lem:countends}. } Consider an LR tableaux $T$ of shape $\alpha$ and content $\mu(M,u)$. By Lemma \ref{lem:deep}, there are exactly $(k-\delta_\alpha)$ cells, occupying $S'$ columns, which have at least two cells above in the same column and which therefore must be filled by the $u+(k-\delta_\alpha-u)$ available entries at least $3$. Of the $u$ $3$'s, one must occupy the top such cell and no two may appear in the same column, so there are $\binom{S'-1}{u-1}$ choices of in which columns to place the remaining $(u-1)$ $3$'s, after which the remaining $(k-\delta_\alpha-u)$ cells must be filled with the $(k-\delta_\alpha-u)$ entries at least $4$ in increasing order.\\

We must then place the $1$'s and $2$'s. By \eqref{eq:sumqj} and \eqref{eq:parttranspose}, there are $$\sum_{j\geq 0}q'_j=R-k-1+\delta_\alpha$$ columns of length at least $2$. The top two cells of these columns must be filled with a $1$ above a $2$ and the rest of the top row of length at least two must be filled with $1$'s. We now have $M$ $2$'s left to place. There are presently $(z_1-2)$ more $1$'s than $2$'s placed, unless $p_1=0$, in which case there are $(z_1-1)$, and so because $$M\leq\epsilon_0(\alpha)=z_1-2+\chi(p_1=0),$$ we do not need to worry about the lattice word condition. Because the rows must be weakly increasing, we need only specify how many $2$'s will occupy each of the remaining $(R-k-1)$ rows of length at least $2$. The $i$-th such row from the top has its first and last cells occupied so has $(z_i-2)$ vacant cells, unless $i=R-k$ and $p_{R-k+1}=0$, in which case only its last cell is occupied and so has $(z_i-1)$ vacant cells. Therefore, the number $x_i$ of additional $2$'s that can be placed in this row satisfies $$0\leq x_i\leq \epsilon_i(\alpha),$$ so the number of ways to place the remaining $M$ $2$'s is exactly $|E_{\alpha,M}|$. Putting this together, we have the number of LR tableaux of shape $\alpha$ and content $\mu(M,u)$ is $c_{\alpha,\mu(M,u)}=\binom{S'-1}{u-1}|E_{\alpha,M}|$, as desired.\qed\\

Now we will start looking at compositions in pairs in order to show that indeed the LR coefficient above informs us about the ends of a composition.

\begin{lemma} \label{lem:compareends}
Suppose that $\lambda(\alpha)=\lambda(\beta)$ and $\delta_\alpha=\delta_\beta$. Assume that $\alpha_1\leq \alpha_R$ and $\beta_1\leq\beta_R$. \begin{enumerate}
\item \label{part:compare1}If $\alpha_1<\beta_1$, then $|E_{\alpha,\epsilon_0(\alpha)}|=|E_{\beta,\epsilon_0(\alpha)}|+1+\chi(\alpha_1=\alpha_R)$.
\item \label{part:compare2}If $\alpha_1=\beta_1$ and $\alpha_R<\beta_R$, then $|E_{\alpha^*,\epsilon_0(\alpha^*)}|=|E_{\beta^*,\epsilon_0(\alpha^*)}|+1$.
\item \label{part:comparesame}If $\alpha_1=\beta_1$ and $\alpha_R=\beta_R$, then $|E_{\alpha,M}|=|E_{\beta,M}|$ for every $M\leq \frac12(N-2R+k+2-\delta_\alpha)$. 
\end{enumerate}
\end{lemma}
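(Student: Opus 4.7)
The plan is to view $|E_{\alpha,M}|$ as the coefficient of $y^M$ in the polynomial
$f_\alpha(y) = \prod_{i=1}^{R-k-1}\bigl(1+y+\cdots+y^{\epsilon_i(\alpha)}\bigr)$,
so the entire analysis reduces to tracking the multiset $\mathcal{E}(\alpha) = \{\epsilon_i(\alpha) : 1 \leq i \leq R-k-1\}$. Since $\lambda(\alpha)=\lambda(\beta)$ forces the multisets $\{z_1(\alpha),\ldots,z_{R-k}(\alpha)\}$ and $\{z_1(\beta),\ldots,z_{R-k}(\beta)\}$ to coincide, the only way $\mathcal{E}(\alpha)$ and $\mathcal{E}(\beta)$ can differ is by which $z_j$ lands at position $0$ or at position $R-k-1$, together with the boundary corrections $\chi(p_1=0)$ and $\chi(p_{R-k+1}=0)$ that flag whether $\alpha_1$ or $\alpha_R$ equals $1$.

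For part (3), I would show that $\alpha_1=\beta_1$, $\alpha_R=\beta_R$, and $\delta_\alpha=\delta_\beta$ together force $\mathcal{E}(\alpha)=\mathcal{E}(\beta)$ as multisets, by splitting into the four subcases determined by whether each of $\alpha_1$ and $\alpha_R$ equals $1$. In each subcase the position-$0$ entry (along with its $\chi$-correction) matches between $\alpha$ and $\beta$, so what remains at positions $1$ through $R-k-1$ is the same multiset. Consequently $f_\alpha \equiv f_\beta$ and $|E_{\alpha,M}|=|E_{\beta,M}|$ for every $M$, and in particular for every $M \leq \tfrac12(N-2R+k+2-\delta_\alpha)$.

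For parts (1) and (2), I would compare $\mathcal{E}(\alpha)$ and $\mathcal{E}(\beta)$ at the specific target $M=\epsilon_0(\alpha)$ (respectively $M=\epsilon_0(\alpha^*)$). In part (1), the hypothesis $\alpha_1<\beta_1$ makes one entry of $\mathcal{E}(\beta)$ strictly smaller than its counterpart in $\mathcal{E}(\alpha)$, yielding a natural injection $E_{\beta,\epsilon_0(\alpha)}\hookrightarrow E_{\alpha,\epsilon_0(\alpha)}$. Counting the complement of its image produces one generic extra lattice point coming from saturating the now-relaxed constraint, plus one more point exactly in the symmetric subcase $\alpha_1=\alpha_R$ (where an additional unconstrained coordinate appears at the opposite end). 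Part (2) is then part (1) applied to the reverses $\alpha^*$ and $\beta^*$: the hypothesis $\alpha_R<\beta_R$ plays the role of $\alpha_1^*<\beta_1^*$, and the extra $\chi$-term vanishes because the hypothesis $\alpha_1=\beta_1$ combined with $\alpha_R<\beta_R$ rules out the symmetric subcase for the reversed pair.

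The main obstacle will be the bookkeeping imposed by the boundary corrections $\chi(p_1=0)$ and $\chi(p_{R-k+1}=0)$: these toggle depending on whether $\alpha_1$ or $\alpha_R$ equals $1$, and interact with the hypotheses $\alpha_1\leq\alpha_R$, $\beta_1\leq\beta_R$, and the comparison between $\alpha_1$ and $\beta_1$, producing several subcases in parts (1) and (2) that must each be verified individually to yield the uniform difference $1+\chi(\alpha_1=\alpha_R)$ in part (1) and $1$ in part (2).
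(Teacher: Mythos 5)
Your overall method --- reducing everything to the multiset $\{\epsilon_i(\alpha):1\le i\le R-k-1\}$ of box constraints, observing that $\lambda(\alpha)=\lambda(\beta)$ forces these multisets to agree except at the entries coming from the end parts, and then counting the lattice points excluded by the constraints that become binding at the level $M=\epsilon_0(\alpha)$ --- is exactly the paper's argument (the generating-function packaging is cosmetic), and your treatments of parts (1) and (3) are sound modulo the bookkeeping you acknowledge.

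However, your part (2) has a genuine gap. Part (1) cannot be ``applied to the reverses'': under the hypotheses of part (2) we have $\beta_1=\alpha_1\le\alpha_R<\beta_R$, so $(\beta^*)_1=\beta_R>\beta_1=(\beta^*)_R$ and the standing assumption $\beta_1\le\beta_R$ fails for the reversed pair. Moreover, your stated reason for the disappearance of the $\chi$-term is false: the hypotheses of part (2) do \emph{not} rule out $\alpha_1=\alpha_R$, which is the symmetric subcase for the reversed pair. For instance $\alpha=343$, $\beta=334$ satisfies $\lambda(\alpha)=\lambda(\beta)$, $\delta_\alpha=\delta_\beta=0$, $\alpha_1=\beta_1=3$, and $\alpha_R=3<4=\beta_R$, yet $\alpha_1=\alpha_R$; here $\epsilon(\alpha^*)=222$ and $\epsilon(\beta^*)=312$, so $|E_{\alpha^*,2}|=3=|E_{\beta^*,2}|+1$, and the answer is $1$ rather than the $1+\chi(\alpha_1=\alpha_R)=2$ that a literal transfer of part (1) would predict. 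The true reason the $\chi$-term is absent is structural: because $\alpha_1=\beta_1$, the constraint multisets for $\alpha^*$ and $\beta^*$ differ in only \emph{one} entry ($\alpha_R-2$ versus $\beta_R-2$), rather than in up to three entries as in part (1), so at level $M=\alpha_R-1$ exactly one lattice point is excluded. Part (2) therefore needs its own parallel computation, as in the paper, rather than a citation of part (1).
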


Note that because $r_\alpha=r_{\alpha^*}$, the assumptions $\alpha_1\leq\alpha_R$ and $\beta_1\leq\beta_R$ are of little concern. We illustrate with an example before examining the technicalities.

\begin{example} Let $\alpha=31311515$ as in Example \ref{ex:countends} and let $\beta=51113315$. Then $\epsilon(\beta)=4114$ and taking $M=\epsilon_0(\alpha)=2$, we have $$E_{\beta,\epsilon_0(\alpha)}=\{x_1x_2x_3: \ \sum_{i=1}^3 x_i=2, \ 0\leq x_1\leq 1, \ 0\leq x_2\leq 1, \ 0\leq x_3\leq 4\}=\{110,101,011,002\}.$$ Comparing this with \eqref{eq:endscalc}, we see that the only difference is the constraint on $x_2$. There we had the constraint $x_2\leq 3$, which was superfluous because $x_1+x_2+x_3=M=2$, while here we have the constraint $x_2\leq 1$, which specifically excludes the single point $020$ and as a result, $|E_{\alpha,2}|=|E_{\beta,2}|+1$. 
\end{example}

\emph{Proof of Lemma \ref{lem:compareends}. } Because the proofs are very similar, we only prove the first part. Note that because $\delta_\alpha=\delta_\beta$, we must have $\alpha_1\neq 1$ and so $\delta_\alpha=\delta_\beta=0$, $\epsilon_0(\alpha)=\alpha_1-1<\beta_1-1=\epsilon_0(\beta)$, $\epsilon_{R-k-1}(\alpha)=\alpha_R-1$, and $\epsilon_{R-k-1}(\beta)=\beta_R-1$.\\

Because $\lambda(\alpha)=\lambda(\beta)$, the parts $z(\alpha)$ and $z(\beta)$ of $\alpha$ and $\beta$ are only permuted and so there are $i,j,i',j'$ such that $z_1(\alpha)=z_i(\beta)$, $z_{R-k}(\alpha)=z_j(\beta)$, $z_1(\beta)=z_{i'}(\alpha)$, and $z_{R-k}(\beta)=z_{j'}(\alpha)$. Now excluding these parts, we have $$\{\epsilon_1(\alpha),\ldots,\epsilon_{R-k-2}(\alpha)\}\setminus\{\epsilon_{i'}(\alpha),\epsilon_{j'}(\alpha)\}=\{\epsilon_1(\beta),\ldots,\epsilon_{R-k-2}(\beta)\}\setminus\{\epsilon_i(\beta),\epsilon_j(\beta)\}$$ as multisets so we can re-enumerate these as $\{\epsilon_1,\ldots,\epsilon_{R-k-4}\}$. By permuting the constraints, which does not change the sizes of the sets, we now have \begin{align}\nonumber |E_{\alpha,\epsilon_0(\alpha)}|=|\{&x_1\cdots x_{R-k-1}: \ \sum_{i=1}^{R-k-1}x_i=\alpha_1-1, \ 0\leq x_i\leq\epsilon_i\hbox{ for }1\leq i\leq R-k-4,\\\nonumber& 0\leq x_{R-k-3}\leq \beta_1-2, \ 0\leq x_{R-k-2}\leq \beta_R-2, \ 0\leq x_{R-k-1}\leq \alpha_R-1 \}|\\\nonumber |E_{\beta,\epsilon_0(\alpha)}|=|\{&x_1\cdots x_{R-k-1}: \ \sum_{i=1}^{R-k-1}x_i=\alpha_1-1, \ 0\leq x_i\leq\epsilon_i\hbox{ for }1\leq i\leq R-k-4,\\\nonumber& 0\leq x_{R-k-3}\leq \alpha_1-2, \ 0\leq x_{R-k-2}\leq \alpha_R-2, \ 0\leq x_{R-k-1}\leq \beta_R-1 \}|. \end{align} We see that the constraints $0\leq x_i\leq\epsilon_i$ for $1\leq i\leq R-k-4$ are identical for the two sets. Because $\alpha_1-1\leq\beta_1-2\leq\beta_R-2$ and $\alpha_1-1\leq\alpha_R-1$ by hypothesis, the last three constraints for $E_{\alpha,\epsilon_0(\alpha)}$ are superfluous and similarly, because $\alpha_1-1\leq\beta_R-1$, the last constraint for $E_{\beta,\epsilon_0(\alpha)}$ is too. However, the constraint $x_{R-k-3}\leq\alpha_1-2$ for $E_{\beta,\epsilon_0(\alpha)}$ specifically excludes the single point $0\cdots0(\alpha_1-1)00$ and if $\alpha_1=\alpha_R$, the constraint $x_{R-k-2}\leq\alpha_R-2$ specifically excludes the single point $0\cdots0(\alpha_1-1)0$. Each of these appear in the first set. Therefore, we have $|E_{\alpha,\epsilon_0(\alpha)}|=|E_{\beta,\epsilon_0(\alpha)}|+1+\chi(\alpha_1=\alpha_R)$.\qed \\

Now Lemma \ref{lem:countends} and Lemma \ref{lem:compareends} allow us to identify specific partitions at which the LR coefficients for $\alpha$ and $\beta$ differ. One immediate consequence is the following necessary condition for Schur-positivity of a difference $r_\alpha-r_\beta$, which generalizes \cite[Theorem 40]{ribposnecmax}.

\begin{theorem} \label{thm:generalshortends} Suppose that $\lambda(\alpha)=\lambda(\beta)$. Assume that $\alpha_1\leq\alpha_R$ and $\beta_1\leq\beta_R$. Then the compositions $\alpha_1\alpha_R$ and $\beta_1\beta_R$ satisfy $$\hbox{ if }r_\alpha\geq_sr_\beta\hbox{, then }\alpha_1\alpha_R\leq_{lex}\beta_1\beta_R.$$ \end{theorem}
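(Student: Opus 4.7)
The plan is to prove the contrapositive: assume $\alpha_1\alpha_R>_{lex}\beta_1\beta_R$, and exhibit either an obstruction from Proposition \ref{prop:sedelta} or a partition $\mu$ with $c_{\alpha,\mu}<c_{\beta,\mu}$, which by the Littlewood--Richardson rule forces $r_\alpha\not\geq_s r_\beta$. Because $\alpha_1\leq\alpha_R$ and $\beta_1\leq\beta_R$, the value $\delta_\gamma$ of each composition is determined from $\gamma_1$ (namely $\delta_\gamma=0$ if $\gamma_1\geq 2$, $\delta_\gamma=1$ if $\gamma_1=1<\gamma_R$, and $\delta_\gamma=2$ if $\gamma_1=\gamma_R=1$). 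A short case check then shows that $\alpha_1\alpha_R>_{lex}\beta_1\beta_R$ is incompatible with $\delta_\alpha>\delta_\beta$, so we are left with two alternatives: either $\delta_\alpha<\delta_\beta$, in which case Proposition \ref{prop:sedelta} directly yields $r_\alpha\not\geq_s r_\beta$; or $\delta_\alpha=\delta_\beta$, which is the main case.

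Assume now $\delta_\alpha=\delta_\beta$. I split on whether $\alpha_1>\beta_1$ or $\alpha_1=\beta_1$ with $\alpha_R>\beta_R$. In the first subcase we necessarily have $\alpha_1,\beta_1\geq 2$, and I apply part (1) of Lemma \ref{lem:compareends} with the roles of $\alpha$ and $\beta$ exchanged to obtain
$$|E_{\beta,\epsilon_0(\beta)}|=|E_{\alpha,\epsilon_0(\beta)}|+1+\chi(\beta_1=\beta_R).$$
Setting $M=\epsilon_0(\beta)$, the inequality $\beta_1<\alpha_1$ gives $M\leq\epsilon_0(\alpha)$, so Lemma \ref{lem:countends} is applicable to both shapes at $M$. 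Taking $u=1$ (or $u=0$ when $S'=0$, in which case the formula for $\mu(M,u)$ produces the same underlying partition and the same binomial factor $1$) yields a partition $\mu=\mu(M,1)$ with $c_{\alpha,\mu}=|E_{\alpha,M}|$ and $c_{\beta,\mu}=|E_{\beta,M}|$, hence the strict inequality $c_{\beta,\mu}>c_{\alpha,\mu}$. The remaining subcase $\alpha_1=\beta_1$, $\alpha_R>\beta_R$ is handled identically after reversing both compositions and invoking part (2) of Lemma \ref{lem:compareends}; the identity $r_\gamma=r_{\gamma^*}$ transports the resulting strict inequality on LR coefficients of $\alpha^*$ and $\beta^*$ back to $\alpha$ and $\beta$.

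The main obstacle is essentially bookkeeping: one must confirm in each surviving subcase that the hypotheses of Lemma \ref{lem:compareends} and Lemma \ref{lem:countends} are met, that the chosen $M$ lies in the admissible range for both shapes simultaneously, and that the possibly different values $S'(\alpha)$ and $S'(\beta)$ do not interfere with the comparison. The last point is the reason for fixing $u=1$: the binomial coefficient $\binom{S'-1}{u-1}$ equals $1$ for any $S'\geq 1$, and the degenerate case $S'=0$ is absorbed into the $u=0$ convention of Lemma \ref{lem:countends} without altering the underlying partition. Once these checks are carried out, the conclusion is essentially immediate from the two supporting lemmas.
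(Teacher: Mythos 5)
Your proposal is correct and follows essentially the same route as the paper: dispose of $\delta_\alpha\neq\delta_\beta$ via Proposition \ref{prop:sedelta}, then use Lemma \ref{lem:countends} together with Lemma \ref{lem:compareends} (with roles swapped, at $M=\epsilon_0$ of the composition with the smaller relevant end) to produce a partition where $c_{\beta,\mu}>c_{\alpha,\mu}$, handling the second subcase by reversal. Your choice of $u=1$ rather than $u=S'$ is a harmless variant --- indeed slightly tidier, since it sidesteps the question of whether $S'(\alpha)=S'(\beta)$.
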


\begin{proof}
Before we can apply Lemma \ref{lem:compareends}, we must first address the case where $\delta_\alpha\neq\delta_\beta$. By Proposition \ref{prop:sedelta}, we would have $\delta_\alpha>\delta_\beta$, from which it follows that $\alpha_1\alpha_R\leq_{lex}\beta_1\beta_R$, as desired. Now we may assume that $\delta_\alpha=\delta_\beta$.\\

If $\beta_1<\alpha_1$, then by Lemma \ref{lem:countends} and Lemma \ref{lem:compareends}, Part \ref{part:compare1}, we would have  $$c_{\beta,\mu(\epsilon_0(\alpha),S')}=|E_{\beta,\epsilon_0(\alpha)}|>|E_{\alpha,\epsilon_0(\alpha)}|=c_{\alpha,\mu(\epsilon_0(\alpha),S')},$$ contradicting $r_\alpha\geq_s r_\beta$ by Theorem \ref{thm:lrrule}, and so $\alpha_1\leq \beta_1$. Similarly, if $\alpha_1=\beta_1$ and $\beta_R>\alpha_R$, then by Lemma \ref{lem:countends} and Lemma \ref{lem:compareends}, Part \ref{part:compare2}, we would have  $$c_{\beta^*,\mu(\epsilon_0(\alpha^*),S')}=|E_{\beta^*,\epsilon_0(\alpha^*)}|>|E_{\alpha^*,\epsilon_0(\alpha^*)}|=c_{\alpha^*,\mu(\epsilon_0(\alpha^*),S')},$$ contradicting $r_\alpha\geq_s r_\beta$ by Theorem \ref{thm:lrrule} because $r_\alpha=r_{\alpha^*}$, and so $\alpha_R\leq\beta_R$, as desired.
\end{proof}

Our next task will be to investigate the following statistic. 

\begin{definition} \label{def:ap} The \emph{adjacent pairs} of $\alpha$ is the multiset of multisets $$ap(\alpha)=\{\{\alpha_1,\alpha_2\},\{\alpha_2,\alpha_3\},\ldots,\{\alpha_{R-1},\alpha_R\}\}.$$\end{definition}

Because coarsenings of $\alpha$ with length $(R-1)$ are of the form $\alpha_1\cdots\alpha_{i-1}(\alpha_i+\alpha_{i+1})\alpha_{i+2}\cdots\alpha_R$, we see that $ap(\alpha)$ precisely encodes these coarsenings, the longest aside from $\alpha$ itself.

\begin{proposition} \label{prop:eap} Suppose that $\lambda(\alpha)=\lambda(\beta)$. If $e(\alpha)\neq e(\beta)$, then $ap(\alpha)\neq ap(\beta)$. \end{proposition}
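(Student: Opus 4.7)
The plan is to prove the contrapositive: assuming $ap(\alpha)=ap(\beta)$ together with $\lambda(\alpha)=\lambda(\beta)$, I will deduce that $e(\alpha)=e(\beta)$. The conceptual idea is that $ap(\gamma)$ may be viewed as the edge multiset of a multigraph on the positive integers (with a pair $\{v,v\}$ interpreted as a loop at $v$), in which the composition $\gamma$ traces an Eulerian walk from $\gamma_1$ to $\gamma_R$. The degree sequence of this multigraph then detects the two endpoints of the walk, and that is what I exploit.

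Concretely, for each positive integer $v$ I introduce the statistic
$$n_v(\gamma)=\sum_{\{a,b\}\in ap(\gamma)}\bigl(\chi(a=v)+\chi(b=v)\bigr),$$
which is just the degree of $v$ in the multigraph attached to $\gamma$, loops counted twice (as forced by the multiset convention in Definition \ref{def:ap}). The key step is a one-line incidence count: each occurrence $\gamma_i=v$ lies in exactly two adjacent pairs if $1<i<R$ and in exactly one adjacent pair if $i\in\{1,R\}$. Summing these contributions over $i$ yields the identity
$$n_v(\gamma)=2m_v(\gamma)-\chi(\gamma_1=v)-\chi(\gamma_R=v),$$
where $m_v(\gamma)=|\{i:\gamma_i=v\}|$.

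With this identity in hand the conclusion follows immediately. From $ap(\alpha)=ap(\beta)$ I get $n_v(\alpha)=n_v(\beta)$ for every $v$, and from $\lambda(\alpha)=\lambda(\beta)$ I get $m_v(\alpha)=m_v(\beta)$ for every $v$, so the correction terms must agree:
$$\chi(\alpha_1=v)+\chi(\alpha_R=v)=\chi(\beta_1=v)+\chi(\beta_R=v)\quad\text{for every }v.$$
But the function $v\mapsto\chi(\gamma_1=v)+\chi(\gamma_R=v)$ is precisely the multiplicity function of the multiset $e(\gamma)=\{\gamma_1,\gamma_R\}$, so $e(\alpha)=e(\beta)$, contradicting the hypothesis. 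There is no real obstacle in this plan; the only point requiring mild care is that a loop $\{v,v\}\in ap(\gamma)$ must contribute $2$ rather than $1$ to $n_v(\gamma)$, which is automatic once $ap$ is taken as a multiset of multisets.
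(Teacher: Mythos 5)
Your proof is correct and is essentially the paper's own argument: the identity $n_v(\gamma)=2m_v(\gamma)-\chi(\gamma_1=v)-\chi(\gamma_R=v)$ is exactly the paper's observation that every part lies in two adjacent pairs except the two end parts, which lie in one, so that $e(\alpha)$ can be read off from $ap(\alpha)$ and $\lambda(\alpha)$. The Eulerian-walk framing and the explicit handling of loops are nice touches but do not change the substance.
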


\begin{proof}
Every part $\alpha_i$ belongs to two adjacent pairs, namely $\{\alpha_{i-1},\alpha_i\}$ and $\{\alpha_i,\alpha_{i+1}\}$, with the exception of $\alpha_1$ and $\alpha_R$, which belong to only one. Therefore, we can read off $e(\alpha)$ from how many times each integer of $\lambda(\alpha)$ is present in $ap(\alpha)$.
\end{proof}

\begin{lemma} \label{lem:ap} Suppose that $r_\alpha-r_\beta=s_{ab1^d}$, $\lambda(\alpha)=\lambda(\beta)$, and $ap(\alpha)\neq ap(\beta)$. Then $$\lambda(\alpha)=a(b-1)1^{d+1} \hbox{ or }\lambda(\alpha)=\lambda((a-b+1)b(b-1)1^d).$$ \end{lemma}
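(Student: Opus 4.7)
The plan is to extract $h$-basis information from $r_\alpha-r_\beta=s_{ab1^d}$ via equation \eqref{eq:nejt}, pin down $R$ exactly, read off the length-$(R-1)$ data, and conclude by a short multiset case analysis.

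\textbf{Pinning down $R$.} Let $c_\mu$ denote the coefficient of $h_\mu$ in $s_{ab1^d}$; the identity permutation in the Jacobi--Trudi determinant of Theorem \ref{thm:jt} gives $c_{ab1^d}=1$, and $\ell(ab1^d)=d+2$. If $R<d+2$, then $\mathcal{M}(\alpha)$ contains no partition with $d+2$ parts, so the left-hand side of \eqref{eq:nejt} for $\mu=ab1^d$ vanishes, contradicting $c_{ab1^d}=1$. If $R=d+2$, then $m_{\mathcal{M}(\alpha)}(ab1^d)=\chi(\lambda(\alpha)=ab1^d)$ and similarly for $\beta$, so the assumption $\lambda(\alpha)=\lambda(\beta)$ forces the left-hand side of \eqref{eq:nejt} to vanish, again contradicting $c_{ab1^d}=1$. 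Conversely, $ap(\alpha)\neq ap(\beta)$ forces some $\mu$ of length $R-1$ to have $c_\mu\neq 0$; since the Jacobi--Trudi expansion produces only partitions of length at most $d+2$, we have $R-1\leq d+2$. Hence $R=d+3$.

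\textbf{Length-$(d+2)$ terms.} Because $b\geq 2$ by Proposition \ref{prop:nohooks}, the only permutations $\sigma$ contributing to the length-$(d+2)$ part of the Jacobi--Trudi determinant for $s_{ab1^d}$ must satisfy $\sigma(i)\geq i$ for all $i\geq 3$, which forces $\sigma(i)=i$ for $i\geq 3$; hence $\sigma$ is either the identity or the transposition $(1\,2)$, producing $h_{ab1^d}-h_{(a+1)(b-1)1^d}$. A brief multiset argument shows that distinct unordered pairs $\{x,y\}$ of positive integers yield distinct partitions $\Lambda\setminus\{x,y\}\cup\{x+y\}$, where $\Lambda:=\lambda(\alpha)=\lambda(\beta)$; so the length-$(R-1)$ multiplicities in $\mathcal{M}(\alpha)$ reduce to the pair multiplicities in $ap(\alpha)$. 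Applying \eqref{eq:nejt} with sign $(-1)^{R-(d+2)}=-1$, we obtain a pair $\{x_1,y_1\}$ (occurring one more time in $ap(\alpha)$ than in $ap(\beta)$) with $\Lambda\setminus\{x_1,y_1\}\cup\{x_1+y_1\}=(a+1)(b-1)1^d$, and a pair $\{x_2,y_2\}$ (occurring one more time in $ap(\beta)$ than in $ap(\alpha)$) with $\Lambda\setminus\{x_2,y_2\}\cup\{x_2+y_2\}=ab1^d$.

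\textbf{Case analysis.} Since $x_i,y_i\geq 1$, the sums satisfy $x_1+y_1\in\{a+1,b-1\}$ and $x_2+y_2\in\{a,b\}$, giving four subcases for $\Lambda$. In either subcase with $x_1+y_1=b-1$, the element $a+1$ lies in $\Lambda$ but cannot appear in the alternative description $\{x_2,y_2,b,1^d\}$ or $\{a,x_2,y_2,1^d\}$, because $a\geq b$ forces $a+1>b$ and $x_2,y_2\leq \max(a,b)-1<a+1$, a contradiction. The subcase $(x_1+y_1,x_2+y_2)=(a+1,a)$ requires $b\in\{x_1,y_1\}$ and $b-1\in\{x_2,y_2\}$, yielding $\{x_1,y_1\}=\{b,a-b+1\}$, $\{x_2,y_2\}=\{b-1,a-b+1\}$, and $\Lambda=\{a-b+1,b,b-1,1^d\}$, i.e., $\lambda(\alpha)=\lambda((a-b+1)b(b-1)1^d)$. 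The subcase $(a+1,b)$ requires $a\in\{x_1,y_1\}$, yielding $\{x_1,y_1\}=\{a,1\}$, $\{x_2,y_2\}=\{b-1,1\}$, and $\Lambda=\{a,b-1,1^{d+1}\}=a(b-1)1^{d+1}$. The main obstacle is carrying out the case analysis cleanly: the key inputs are the injectivity principle of Step 2 (a straightforward multiset calculation) and the bound $a\geq b$ coming from $ab1^d$ being a partition.
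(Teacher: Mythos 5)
Your proof is correct and follows essentially the same route as the paper's: both identify the two length-$(R-1)$ terms $h_{ab1^d}-h_{(a+1)(b-1)1^d}$ in the Jacobi--Trudi expansion, deduce $\ell(ab1^d)=R-1$, translate via \eqref{eq:nejt} into statements about which adjacent pairs of $\lambda(\alpha)$ merge to produce these partitions, and finish with a case analysis. The only cosmetic difference is that the paper rules out $a+1$ being a part of $\lambda(\alpha)$ using the lexicographic bound \eqref{eq:alslex}, whereas you eliminate the corresponding subcases by direct multiset comparison.
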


\begin{proof}
Recall that $b\geq 2$ by Proposition \ref{prop:nohooks}. Because $ap(\alpha)\neq ap(\beta)$, the multisets $\mathcal{M}(\alpha)$ and $\mathcal{M}(\beta)$ differ at a partition with $(R-1)$ parts determined by a coarsening that arises from joining an adjacent pair of parts in $\alpha$ or $\beta$. In particular, because $ab1^d$ is a partition of greatest length at which $\mathcal{M}(\alpha)$ and $\mathcal{M}(\beta)$ differ, we must have $\ell(ab1^d)=R-1$ and that this partition is of the form \begin{equation}\label{eq:apnu} ab1^d=\lambda((\alpha_i+\alpha_j)\alpha_1\cdots\alpha_{i-1}\alpha_{i+1}\cdots\alpha_{j-1}\alpha_{j+1}\cdots\alpha_R)\end{equation} for some $i,j$. Moreover, by Theorem \ref{thm:jt} the $h$-basis expansion of $s_{ab1^d}$ contains the term $-h_{(a+1)(b-1)1^d}$ so $\mathcal{M}(\alpha)$ and $\mathcal{M}(\beta)$ also differ at this partition, which has length $(R-1)$ and is therefore also of the form above. By \eqref{eq:alslex}, we must have $\lambda(\alpha)\leq_{lex}ab1^d$, so $(a+1)$ cannot be a part of $\alpha$, and so we have \begin{equation}\label{eq:aplamb} \lambda(\alpha)=xy(b-1)1^d\end{equation} for some positive integers $x,y$ with $x+y=a+1$. Comparing with \eqref{eq:apnu}, the $(b-1)$ must be summed with another part of $\lambda(\alpha)$ to make either the $b$, in which case $$\{x,y\}=\{a,1\}\hbox{ and }\lambda(\alpha)=a(b-1)1^{d+1},$$ or the $a$, in which case $$\{x,y\}=\{a-b+1,b\}\hbox{ and }\lambda(\alpha)=\lambda((a-b+1)b(b-1)1^d).$$
\end{proof}

We are now abundantly prepared to prove Theorem \ref{thm:ends}.\\

\emph{Proof of Theorem \ref{thm:ends}. } By reversing $\alpha$ and $\beta$ if necessary, we may assume without loss of generality that $\alpha_1\leq\alpha_R$ and $\beta_1\leq\beta_R$. Also because $\lambda(\alpha)=\lambda(\beta)$ and $\lambda(\alpha^t)=\lambda(\beta^t)$, we have by \eqref{eq:parttranspose} that $q'(\alpha)=q'(\beta)$ and $\delta_\alpha=\delta_\beta$. By Theorem \ref{thm:generalshortends}, we must have  $\alpha_1\alpha_R\leq_{lex}\beta_1\beta_R$, and more specifically because $\delta_\alpha=\delta_\beta$ and $e(\alpha)\neq e(\beta)$, we have either $$2\leq\alpha_1<\beta_1\hbox{, or }\alpha_1=\beta_1\hbox{ and } 2\leq\alpha_R<\beta_R.$$

Therefore, taking $$M=\epsilon_0(\alpha)=\alpha_1-1<\beta_1-1,\hbox{ or }M=\epsilon_0(\alpha^*)=\alpha_R-1<\beta_R-1$$ respectively, we have by Lemma \ref{lem:countends} and Lemma \ref{lem:compareends} that the LR coefficients $$c_{\alpha,\mu(M,u)}\neq c_{\beta,\mu(M,u)}$$ for $1\leq u\leq S'$, unless $S'=0$, in which case $u=0$. If $S'\geq 2$, then $c_{\alpha,\mu(M,1)}\neq c_{\beta,\mu(M,1)}$ and $c_{\alpha,\mu(M,2)}\neq c_{\beta,\mu(M,2)}$, violating \eqref{eq:nelr}, and so we must have $S'=\sum_{j\geq 1} q'_j\leq 1$, meaning that all of the $1$'s in $\alpha$ and $\beta$ are together, with possibly the exception of lone $1$'s on the end. Now we have $c_{\alpha,\mu(M,S')}\neq c_{\beta,\mu(M,S')}$, and so by \eqref{eq:nelr} the partition $\nu$ is  $$\nu=\mu(M,S')=(N-R+1-M)(R-k-1+\delta_\alpha+M)1^{k-\delta_\alpha},$$ which is a partition of the form $\nu=ab1^d$ where $a=N-R+1-M$, $b=R-k-1+\delta_\alpha+M$, and $d=k-\delta_\alpha$. \\

Again because $e(\alpha)\neq e(\beta)$, we have by Proposition \ref{prop:eap} that $ap(\alpha)\neq ap(\beta)$, so by Lemma \ref{lem:ap} and since $\nu=ab1^d$, we have $\lambda(\alpha)=a(b-1)1^{d+1}$ or $\lambda(\alpha)=\lambda((a-b+1)b(b-1)1^d)$. Note that because $R=d+3$, we have $$b=R-k-1+\delta_\alpha+M=(d+3)-d-1+M\hbox{, so }b-1=M+1,$$ and because either $M=\alpha_1-1<\beta_1-1$ or $M=\alpha_R-1<\beta_R-1$, we see that in either case $b-1\in e(\alpha)\setminus e(\beta)$. Also, because $\delta_\alpha=\delta_\beta$, we must have $b\geq 3$.\\

If $\lambda(\alpha)=a(b-1)1^{d+1}$, the only possibilities of $\alpha$ and $\beta$ (up to reversal) satisfying our conditions $S'\leq 1$, $q'(\alpha)=q'(\beta)$, $\delta_\alpha=\delta_\beta$, and $b-1\in e(\alpha)\setminus e(\beta)$ are \begin{itemize}
\item $\alpha=1^{d+1}a(b-1)$, $\beta=1^{d+1}(b-1)a$
\item $\alpha=1a1^d(b-1)$, $\beta=1(b-1)1^da$. \end{itemize}
These are Cases \eqref{case:3} and \eqref{case:4}, as desired. \\

On the other hand, if $\lambda(\alpha)=\lambda((a-b+1)b(b-1)1^d)$, then because the number of $1$'s of $\alpha$ is $k=d$ and $d=k-\delta_\alpha$, we have $\delta_\alpha=\delta_\beta=0$. Additionally, because $h_{ab1^d}$ and $h_{(a+1)(b-1)1^d}$ are the only terms in the $h$-basis expansion of $s_\nu$ with $(R-1)$ parts, and because these coarsenings arise precisely from joining adjacent pairs in $\alpha$ or $\beta$, then by \eqref{eq:nejt} we have \begin{align}\label{eq:endsapconds} m_{ap(\alpha)}(\{b,a-b+1\})&=m_{ap(\beta)}(\{b,a-b+1\})+1\\\nonumber m_{ap(\beta)}(\{b-1,a-b+1\})&=m_{ap(\alpha)}(\{b-1,a-b+1\})+1\\\nonumber m_{ap(\alpha)}(\{x,y\})&=m_{ap(\beta)}(\{x,y\})\hbox{ otherwise.}\end{align}

Now the only possibility of $\alpha$ and $\beta$ (up to reversal) satisfying our conditions $S'\leq 1$, $q'(\alpha)=q'(\beta)$, $\delta_\alpha=\delta_\beta=0$, $b-1\in e(\alpha)\setminus e(\beta)$, and \eqref{eq:endsapconds} is \begin{itemize}
\item $\alpha=(b-1)1^db(a-b+1)$, $\beta=b1^d(b-1)(a-b+1)$.
\end{itemize} This is Case \eqref{case:5}, as desired.\qed\\

Theorem \ref{thm:ends}, along with Theorem \ref{thm:forwarddirection}, Proposition \ref{prop:nohooks}, Proposition \ref{prop:ab1d}, and Theorem \ref{thm:nelambda} completes the proof of Theorem \ref{thm:main}. 

\section{Further directions} \label{sec:further}
We conclude with our conjecture about the classification of near-equality of ribbon Schur functions in general.

\begin{conjecture}\label{conj:classify} Suppose that $r_\alpha-r_\beta=s_\nu$. Then $\alpha$, $\beta$, and $\nu$ are (up to reversal of $\alpha$ and $\beta$) as in one of the five cases of Theorem \ref{thm:main}, one of the five cases of Corollary \ref{cor:main}, or one of the following six cases.
\begin{align*}
\alpha&=1^{c+d+1}a(b-1)1^c& &\beta=1^{c+d+1}(b-1)a1^c& &\nu=ab2^c1^d& \\
\alpha&=(b-1)1^{c-1}21^{c+d}a& &\beta=(b-1)1^{c+d}21^{c-1}a& &\nu=ab2^c1^d& \\
\alpha&=1^ca(b-1)1^{c-1}21^d& &\beta=1^c(b-1)a1^{c-1}21^d& &\nu=ab2^c1^d& \\
\alpha&=(a-b+1)(b-1)1^{c-1}21^{c+d}(b-1)& &\beta=(a-b+1)(b-1)1^{c+d}21^{c-1}(b-1)& &\nu=ab2^c1^d\\
\alpha&=2a121& &\beta=212a1& &\nu=a42&\\
\alpha&=231^{d+2}21& &\beta=21^{d+2}231& &\nu=33221^d&
\end{align*} In particular, we conjecture that $\nu$ must be of the form $\nu=ab2^c1^d$, in other words, $\nu_3\leq 2$. \end{conjecture}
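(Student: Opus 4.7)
The plan is to follow the three-stage blueprint of Theorem \ref{thm:main}: verify that each of the sixteen conjectured families produces a near-equality, then reduce the converse to a single constrained regime using what is already proved, and finally classify inside that regime. The ``if'' direction for the ten families of Theorem \ref{thm:main} and Corollary \ref{cor:main} is already in hand, so only the six new families remain. In each of the first four new families, $\alpha$ is obtained from $\beta$ by relocating a single block of consecutive $1$'s across a non-$1$ part, which is precisely the operation captured by Theorem \ref{thm:specialmovingones}, so the strategy is to enumerate $A_{j,t}(\alpha)$ and $B_{j,t}(\alpha)$ explicitly and check that their signed contribution collapses to $s_{ab 2^c 1^d}$. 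The two sporadic cases $\nu = a42$ and $\nu = 33221^d$ do not obviously fit that template and appear to require either a direct Littlewood--Richardson enumeration or an iterated application of Theorem \ref{thm:specialmovingones} along two different moving directions.

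For the converse, the results already in hand dispose of the outer cases of a natural trichotomy. By Theorem \ref{thm:nelambda} the assumption $\lambda(\alpha) \neq \lambda(\beta)$ unconditionally forces $\nu = ab 1^d$ and puts the pair in Case \eqref{case:1} or \eqref{case:2}; dually, by Corollary \ref{cor:nelambda}, $\lambda(\alpha^t) \neq \lambda(\beta^t)$ forces $\nu = a 2^c 1^d$ and Case \eqref{case:6} or \eqref{case:7}. One may therefore assume $\lambda(\alpha) = \lambda(\beta)$ and $\lambda(\alpha^t) = \lambda(\beta^t)$, so that every ``first-order'' shape invariant of $\alpha$ and $\beta$ coincides. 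Theorem \ref{thm:ends} and its $\omega$-image then handle the sub-case $e(\alpha) \neq e(\beta)$ or $e(\alpha^t) \neq e(\beta^t)$, recovering Cases \eqref{case:3}--\eqref{case:5} and \eqref{case:8}--\eqref{case:10}. The residual core sub-case is $e(\alpha) = e(\beta)$ and $e(\alpha^t) = e(\beta^t)$, and every one of the six new families belongs to it, so this is the only genuinely open regime.

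The main obstacle is the structural claim $\nu_3 \leq 2$ inside that regime. My plan is to build a ``second-order'' analogue of Lemma \ref{lem:countends}: whereas $\mu(M,u)$ and $|E_{\alpha,M}|$ detect the end parts of $\alpha$, one needs a family of partitions $\mu$ with $\mu_3 \geq 3$ whose LR counts $c_{\alpha,\mu}$ admit a similar lattice-point description sensitive to the relative positions of the non-$1$ blocks, and which are guaranteed to be distinct from $\nu$. Applying \eqref{eq:nelr} to two such $\mu$ simultaneously should then rule out $r_\alpha - r_\beta = s_\nu$ whenever $\nu_3 \geq 3$. Once $\nu = ab 2^c 1^d$ has been forced, the remaining enumeration ought to follow a generalization of the adjacent-pair analysis of Proposition \ref{prop:eap} and Lemma \ref{lem:ap}, now also tracking the longer coarsenings detected by the $h$-basis expansion of $s_{ab 2^c 1^d}$: the constraint that all but a prescribed set of coarsenings of $\alpha$ and $\beta$ must match should cut $(\alpha, \beta)$ down to exactly the four parametric families with $c \geq 1$, with the two sporadic pairs $\nu = a42$ and $\nu = 33221^d$ emerging as the small-parameter configurations where the generic analysis produces additional exceptional solutions. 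I expect the bound $\nu_3 \leq 2$ to be substantially harder than the bookkeeping that follows it, and to require combinatorial invariants beyond those developed so far.
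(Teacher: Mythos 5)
The statement you are addressing is a \emph{conjecture}: the paper itself offers no proof of it, remarking only that the forward direction for the six new families ``is not too difficult to prove'' via Theorem \ref{thm:specialmovingones} and the $\omega$ involution, and that the full classification has been verified by computer for $N\leq 16$. Your proposal correctly reproduces the parts of the argument that are actually available: the ``if'' direction for the first four new families does fit the $M_{j,t}$ template (each pair differs by sliding a block of $1$'s past a non-$1$ part), and your reduction of the converse is sound --- Theorem \ref{thm:nelambda}, Corollary \ref{cor:nelambda}, Theorem \ref{thm:ends}, and its $\omega$-image do dispose of every case except the core regime $\lambda(\alpha)=\lambda(\beta)$, $\lambda(\alpha^t)=\lambda(\beta^t)$, $e(\alpha)=e(\beta)$, $e(\alpha^t)=e(\beta^t)$, and the six new families do all sit inside that regime. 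This framing is consistent with, and slightly more explicit than, what the paper says.

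However, there is a genuine gap at exactly the point where the conjecture stops being a corollary of the paper's machinery. Your plan for forcing $\nu_3\leq 2$ --- a ``second-order analogue'' of Lemma \ref{lem:countends} producing two partitions $\mu$ with $\mu_3\geq 3$ at which $c_{\alpha,\mu}\neq c_{\beta,\mu}$ --- is not constructed; you do not exhibit the family of partitions, the lattice-point description, or the argument that the two discrepancies cannot both be absorbed by the single allowed discrepancy in \eqref{eq:nelr}. The difficulty is real: in the core regime all the invariants the paper develops ($\lambda$, $\lambda(\cdot^t)$, $\delta$, $q'$, ends, ends of transposes) agree for $\alpha$ and $\beta$, so Lemma \ref{lem:compareends} gives no traction, and the adjacent-pair analysis of Lemma \ref{lem:ap} only constrains length-$(R-1)$ coarsenings, which is far from pinning down $(\alpha,\beta)$ when $\nu$ has many parts equal to $2$. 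Likewise your treatment of the two sporadic families ($\nu=a42$ and $\nu=33221^d$) as ``small-parameter configurations where the generic analysis produces additional exceptional solutions'' is a hope rather than an argument. In short, your proposal is a reasonable research program aligned with the paper's stated expectations, but it does not constitute a proof of the conjecture, and the paper does not supply one either.
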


It is not too difficult to prove the forward direction, that is, that near-equality holds in these six cases, by using Theorem \ref{thm:specialmovingones} and applying the $\omega$ involution. This conjecture has been verified by computer for $N\leq 16$. \\

We hope that the techniques presented may provide some insight towards proving Conjecture \ref{conj:classify} and in the study of calculating with symmetric functions in general.

\section*{Acknowledgements} 

The author would like to thank Stephanie van Willigenburg for suggesting this problem, for many hours of insightful discussion, and for her thorough feedback on this paper. \\

The author would like to thank Andrew Rechnitzer for his thoughtful comments.

\end{document}